\documentclass{amsart}
\textwidth=125 mm
\textheight=195 mm
\usepackage[margin=1in]{geometry}
\usepackage{amsmath}
\usepackage{amssymb}
\usepackage{amsthm}
\usepackage{amscd}
\usepackage{enumerate}
\usepackage{xcolor}

\usepackage{graphicx}
\usepackage{amsmath,amsthm,amsfonts,amscd,amssymb,comment,eucal,latexsym,mathrsfs}
\usepackage{stmaryrd}
\usepackage[all]{xy}

\usepackage{epsfig}

\usepackage[all]{xy}
\xyoption{poly}
\usepackage{fancyhdr}
\usepackage{wrapfig}
\usepackage{epsfig}

%\addbibresource{references.bib}

\usepackage{hyperref}
%\usepackage[]{xy}

%%%%% END STANDARD PACKAGES

%%%%% THEOREM-LIKE ENVIRONMENTS--DO NOT MODIFY
% All claims (theorems, propositions, lemmas, and corollaries) will be
% numbered in the same sequence (Lemma 1.3 follows Proposition 1.2),
% and the numbering will have the form <section>.<number> (in the book,
% they will be numbered <chapter>.<section>.<number>.  Conjectures,
% claims, remarks, definitions, and examples are numbered independently
% of the claims and each other, and the numbering will have the form
% <number>, starting from the beginning of the document.  Please do not
% specify any additional theorem-like environments or change the
% behavior of the supplied environments.
\theoremstyle{plain}
\newtheorem{thm}{Theorem}[section]

\newtheorem{prop}[thm]{Proposition}
\newtheorem{lem}[thm]{Lemma}
\newtheorem{cor}[thm]{Corollary}

\theoremstyle{definition}

\theoremstyle{remark}
\newtheorem{remark}{Remark}

%%%%% END THEOREM-LIKE ENVIRONMENTS

%%%%% MARGINS--DO NOT MODIFY
% Setting margins in (La)TeX is sort of a pain.  The article document class
% comes with very wide margins.  The following sets up one-inch margins
% all the way around.

%\topmargin 15pt
%\advance \topmargin by -\headheight
%\advance \topmargin by -\headsep
%\textheight 8.6in
%\oddsidemargin 0pt
%\evensidemargin \oddsidemargin
%\marginparwidth 0.5in
%\textwidth 6.5in
%%%%% END MARGINS

%Bbb
  \def\C{{\mathbb{C}}}           \def\N{{\mathbb{N}}}

%Cal
 \def\cB{{\mathcal{B}}}     \def\cG{{\mathcal{G}}}      \def\cM{{\mathcal{M}}} \def\cN{{\mathcal{N}}}     \def\cS{{\mathcal{S}}}

%Operators

%Miscellaneous

	  \linespread{1.4}

\title{The exact quantum chromatic number of Hadamard graphs}

% \author{A. Meenakshi McNamara}

% \email{mcnama20@purdue.edu}
% \urladdr{}

\author{A. Meenakshi McNamara}
\address{Perimeter Institute for Theoretical Physics,
51 Caroline Street, Waterloo, Ontario, Canada}
\address{Purdue University, Mathematical Sciences Bldg, 150 N University St, West Lafayette, IN 47907}
\email{amcnamara@perimeterinstitute.ca}
\urladdr{}

\begin{document}

\begin{abstract}
       We compute the exact value of the quantum chromatic numbers of Hadamard graphs of order $n=2^N$ for $N$ a multiple of $4$
       using the upper bound derived by Avis, Hasegawa, Kikuchi, and Sasaki, as well as an application of the Hoffman-like lower bound of Elphick and Wocjan that was generalized by Ganesan for quantum graphs. 
       As opposed to prior computations for the lower bound, our approach uses Ito's results on conjugacy class graphs allowing us to also find bounds on the quantum chromatic numbers of products of Hadamard graphs. In particular, we also compute the exact quantum chromatic number of the categorical product of Hadamard graphs.
\end{abstract}
\maketitle

\section*{Introduction}

Quantum coloring of graphs is a modification of the usual notion of graph coloring using non-local games. In a non-local game, players Alice and Bob collaborate to answer pairs of questions without communication using some prior agreed-upon strategy. The possible strategies used by the players have many classifications (see, for instance, \cite{og_qchromatic_graph} and \cite{quantum_graph_homo_op_sys}), including classical strategies and ``quantum" strategies where a shared entangled resource is available. The use of entanglement often allows quantum strategies to outperform classical ones, leading to a ``quantum advantage." In fact, the famous CHSH inequality may be described as such a quantum advantage in a non-local game. There has been especially great interest in non-local games in recent years due to their importance in quantum information theory. Further, the resolution of the Connes Embedding Conjecture through showing that MIP*=RE in \cite{ji2022mipre} relied upon non-local games. Hence, studying non-local games is important to understanding the broader impacts of this result.

In the graph coloring game, the players wish to convince a referee that they have a coloring for a graph, i.e., an assignment of colors to vertices of the graph such that adjacent vertices are different colors.
Graph coloring games are of particular interest because they provide explicit examples of non-local games, and often naturally tie into other problems in quantum information theory and communication as seen in \cite{qcolor_hadamard}. Additionally, there are numerous modifications of these games allowing for the concrete study of different models of non-local games, as has been done in \cite{quantum_graph_homo_op_sys} where the equivalence of graph coloring questions to the Connes Embedding Conjecture through Tsirelson's Problem were studied. Recent work by Harris in \cite{universality_graph_homo_games} showed that all non-local games may be reformulated as a kind of graph coloring games; furthering the importance of graph coloring in the study of non-local games.

In particular, when studying graph coloring games we are interested in the minimum number of colors needed to win. Restricting to quantum strategies, we call this the quantum chromatic number of the graph, $\chi_q$, and it is known that determining this number is an NP hard problem in general \cite{chiQNPHard}.  Numerous examples of graphs $G$ have been found exhibiting a quantum advantage such that $\chi_q(G) < \chi(G)$, where $\chi(G)$ is the usual (i.e., classical) chromatic number of $G$. In particular \cite{qcolor_hadamard} found the first such examples to be Hadamard graphs on $2^N$ vertices with $N>3$ a multiple of $4$, $H_N$. In particular for $N=12$ we have $\chi(H_{12}) = 13$ while $\chi_q(H_{12}) = 12$. These upper bounds on the quantum chromatic number of the Hadamard graph can be understood by viewing the Hadamard graph as an orthogonality graph, and similar results for general orthogonality graphs were developed in \cite{og_qchromatic_graph}. However, in these examples, the computations of the quantum chromatic of graphs give only upper bounds. Further, many of the lower bounds are given by other non-local invariants such as the  tracial rank defined in \cite{fractional}, and thus are similarly difficult to study. Hence, except for a few cases of graphs without any quantum advantage (such as the graphs studied in \cite{spectralBounds_not_priyanga} and the subfamily of Kneser graphs studied in \cite{spectralBounds2}) or with few colors needed in their classical coloring (for example, see \cite{og_qchromatic_graph}), there are very few examples of the precise quantum chromatic numbers of graphs being known.

In this paper, we will study the quantum chromatic number of the Hadamard graphs; giving precise results for an infinite family of graphs. Note that throughout this work, we consider Hadamard graphs as defined by Ito in \cite{Ito_Hadamard_graphs} and \cite{Ito_Hadamard_graphs_II} (see Sections \ref{subsec:Hadamard} and \ref{subsec:HadamardConj}). This is in contrast to another, perhaps more common, definition of Hadamard graphs that comes more directly from the notion of Hadamard matrices, such as those considered in \cite{HadamardQIso}. However, our notion of Hadamard graphs is still related to Hadamard matrices. In particular, as is discussed in \cite{Ito_Hadamard_graphs}, the conjecture of the existence of Hadamard matrices for every $N$ a multiple of $4$ can be rephrased as conjecturing that the clique number of $H_N$ is $N$. These graphs have been studied in such depth in the quantum coloring setting due to their connection to quantum information theory problems. For instance, the first results concerning quantum coloring Hadamard graphs are contained within \cite{HadamardBeforeGraphs1} for $N=2^n$, in the process of studying the problem of simulating entangled systems using classical hidden variable models and communication. In these studies, an upper bound for the quantum chromatic number of Hadamard graphs has been established in \cite{qcolor_hadamard}. In fact, using results of \cite{Hadamard_classical_coloring}, it was shown in \cite{qcolor_hadamard} that Hadamard graphs provide examples of an exponential gap between the quantum and classical chromatic numbers. These results on quantum coloring Hadamard graphs were further applied to the problem of simulating quantum entanglement via classical communication in \cite{appliedHadamard}. In this work we shall focus on lowers bounds for the quantum chromatic number.

% However, despite the success of developing upper bounds, no lower bounds on the quantum chromatic number of these graphs have been established. 

In general, very few graphs have had concrete lower bounds on their quantum chromatic numbers established. In this work, we will make use of the spectral lower bound derived in \cite{spectralBounds_not_priyanga}. These results were generalized to quantum graphs in \cite{Priyanga}, and in fact the proof of these results was simplified using the notions of quantum coloring on quantum graphs. The lower bound for the quantum chromatic number of Hadamard graphs has been calculated in several ways. First, in \cite{graphHomoWHadamardPf} the Lov\'asz theta function was used to prove that $\chi_q(H_N) = N$. This was also proven in \cite{spectralBounds2} using spectral arguments. In the current work, we use the same spectral bound to prove the result. However, we obtain the spectrum of Hadamard graphs by viewing them as conjugacy class graphs which allows us to easily obtain further results on products of Hadamard graphs. As classical graphs are special cases of quantum graphs, we will emphasize the applicability of our approach to quantum graphs as well by phrasing results using the language of quantum graphs when convenient. This is especially relevant as interest in quantum graphs has been growing in recent years due to their connections with quantum information theory. In fact, quantum graphs were developed to generalize the role of graphs in classical information theory to the quantum setting \cite{noncomm_graph_Duan_2013}. Quantum graphs are a non-commutative topology generalization of graphs that have been independently developed several times leading to numerous equivalent descriptions (see \cite{firstQuantumGraph, noncomm_graph_Duan_2013, weaver_qgraphs_relations, comp_q_func_Musto_2018, bigalois_graphIso_2019}). The notion of quantum chromatic numbers has been extended to quantum graphs, and the quantum adjacency matrix approach of \cite{comp_q_func_Musto_2018} lends itself to spectral lower bounds as found in Ganesan's work in \cite{Priyanga} where she generalized the results of \cite{spectralBounds_not_priyanga}. In our work, these lower bounds will prove particularly useful in the study of Hadamard graphs and their products due to an equivalent description of these graphs as conjugacy class graphs, allowing for the spectral results to be found using character theory as in \cite{Ito_Hadamard_graphs}.

% As there are fewer results on lower bounds for quantum chromatic numbers, we will make use of results for quantum graphs. 
% Graphs play an important role in classical information theory, and 

Using these lower bounds in addition to the more usual approach to obtaining upper bounds on quantum chromatic numbers, we obtain the main result for Hadamard graphs:
\begin{thm}[Exact quantum chromatic number of Hadamard graphs]\label{thm:main}
    Let $H_N$ be the Hadamard graph on $2^N$ vertices, $N$ a multiple of $4$. Then,
    $$\chi_q(H_N) = N.$$
\end{thm}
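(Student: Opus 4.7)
The plan is to prove the two inequalities $\chi_q(H_N) \leq N$ and $\chi_q(H_N) \geq N$ separately. For the upper bound I would import the explicit $N$-color quantum strategy constructed by Avis, Hasegawa, Kikuchi and Sasaki in \cite{qcolor_hadamard}, which already achieves this bound and requires no new input here. The substantive task is therefore the matching lower bound, for which I would apply the Hoffman-like spectral inequality of Elphick and Wocjan \cite{spectralBounds_not_priyanga}, extended to quantum graphs by Ganesan in \cite{Priyanga}:
$$\chi_q(G) \geq 1 - \frac{\lambda_{\max}(A)}{\lambda_{\min}(A)},$$
where $A$ is the (quantum) adjacency matrix of $G$. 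So my job reduces to showing that the spectrum of $H_N$ satisfies $\lambda_{\max}/\lambda_{\min} = -(N-1)$.

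To compute that spectrum I would realize $H_N$ as a conjugacy class graph on the abelian group $(\Z/2\Z)^N$ whose connection set consists of the weight-$N/2$ elements, following Ito \cite{Ito_Hadamard_graphs}. Because the group is abelian, its characters diagonalize the adjacency operator, and the eigenvalues are exactly the Krawtchouk-type character sums
$$\lambda_T \;=\; \sum_{|x|=N/2}(-1)^{\langle T, x\rangle}, \qquad T \in (\Z/2\Z)^N.$$
Evaluating at the trivial character gives $\lambda_{\max} = \binom{N}{N/2}$, the valency of $H_N$. A short binomial manipulation for characters $T$ with $|T|=2$ produces the candidate minimum $-\binom{N}{N/2}/(N-1)$, and the remaining task is to verify that no other $\lambda_T$ is smaller. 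Substituting the extremal eigenvalues into the Ganesan bound yields precisely $1 - \lambda_{\max}/\lambda_{\min} = N$, which, combined with the upper bound, proves the theorem.

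The main obstacle, to my mind, is not the arithmetic of any single eigenvalue but the uniform control needed to justify $\lambda_{\min} = -\binom{N}{N/2}/(N-1)$: one must rule out values of $\lambda_T$ below this threshold for every $T \in (\Z/2\Z)^N$, which is a global statement about Krawtchouk polynomials and is where the divisibility hypothesis on $N$ plays a role. Phrasing the computation at the level of the quantum adjacency matrix rather than the classical one, in the framework of \cite{Priyanga}, is a secondary bookkeeping task, but worth doing because the very same spectral calculation will later feed directly into the bounds on products of Hadamard graphs advertised in the abstract.
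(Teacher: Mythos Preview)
Your proposal is correct and follows essentially the same route as the paper: import the AHKS upper bound, then realize $H_N$ as an Ito conjugacy class graph, compute the extremal eigenvalues via character sums (the paper uses Nomura's closed form for the Krawtchouk values and locates the minimum at $r=N-2$, which coincides with your $|T|=2$ candidate by the complementation symmetry when $4\mid N$), and plug into the Elphick--Wocjan/Ganesan Hoffman bound to get $1+\lambda_{\max}/|\lambda_{\min}|=N$. The only cosmetic difference is that you work on the full group $(\Z/2\Z)^N$ while the paper passes to the even-weight connected component, which does not affect the eigenvalues.
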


Graph products in relation to non-local games have also been studied in various settings \cite{graph_products} \cite{fractional} \cite{MR3537033}, and in particular bounds on quantum chromatic numbers for all four fundamental graph products of quantum graphs were studied in \cite{desantiago2024quantumchromaticnumbersproducts}. Viewing Hadamard graphs as conjugacy class graphs allows us to obtain spectral lower bounds on their graph products which may be compared with the previously obtained graph product bounds. In particular, we show the following result for categorical products
\begin{thm}[Categorical products]\label{thm:categorical}
    Let $H_N$ and $H_M$ be Hadamard graphs on $2^N$ and $2^M$ vertices respectively, with $M,N$ multiples of $4$ and $N>M$. Then,
    $$\chi_q(H_N\times H_M) = M.$$
\end{thm}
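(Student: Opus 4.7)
\textbf{Proof plan for Theorem \ref{thm:categorical}.}

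The plan is to establish both inequalities $\chi_q(H_N\times H_M)\leq M$ and $\chi_q(H_N\times H_M)\geq M$ separately, using the projection onto the second factor for the upper bound and the Elphick--Wocjan/Ganesan spectral lower bound applied to the tensor-product adjacency matrix for the lower bound.

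For the upper bound, I would use the standard fact that the coordinate projection $\pi_M: H_N\times H_M \to H_M$ is a graph homomorphism: by the definition of the categorical product, $(v_1,w_1)\sim (v_2,w_2)$ forces $w_1\sim w_2$ in $H_M$. Since quantum homomorphisms compose and a quantum $c$-coloring of a graph $G$ is exactly a quantum homomorphism $G\to K_c$, any quantum $M$-coloring of $H_M$ pulls back to a quantum $M$-coloring of $H_N\times H_M$. Combined with Theorem \ref{thm:main}, which gives $\chi_q(H_M)=M$, this yields $\chi_q(H_N\times H_M)\leq M$.

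For the lower bound, I would invoke the spectral lower bound of \cite{spectralBounds_not_priyanga}, in the form generalized by Ganesan in \cite{Priyanga}: for a (quantum) graph with adjacency matrix $A$ having extremal eigenvalues $\lambda_{\max}$ and $\lambda_{\min}<0$, one has $\chi_q\geq 1-\lambda_{\max}/\lambda_{\min}$. The adjacency matrix of the categorical product is $A_{H_N}\otimes A_{H_M}$, whose eigenvalues are the pairwise products of the eigenvalues of the factors. Since both $H_N$ and $H_M$ are regular with positive largest eigenvalue,
$$\lambda_{\max}(A_{H_N}\otimes A_{H_M})=\lambda_{\max}(A_{H_N})\,\lambda_{\max}(A_{H_M}),$$
while
$$\lambda_{\min}(A_{H_N}\otimes A_{H_M})=\min\bigl(\lambda_{\max}(A_{H_N})\lambda_{\min}(A_{H_M}),\ \lambda_{\min}(A_{H_N})\lambda_{\max}(A_{H_M})\bigr).$$
The proof of Theorem \ref{thm:main} computes the spectrum of $H_N$ via conjugacy class/character theory and shows that the ratio $\lambda_{\max}(A_{H_N})/|\lambda_{\min}(A_{H_N})|$ equals exactly $N-1$ (since the spectral bound is tight for $H_N$). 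Substituting, the Hoffman-type ratio for the tensor product collapses to
$$\frac{\lambda_{\max}(A_{H_N})\lambda_{\max}(A_{H_M})}{|\lambda_{\min}(A_{H_N}\otimes A_{H_M})|}=\min\left(\frac{\lambda_{\max}(A_{H_N})}{|\lambda_{\min}(A_{H_N})|},\ \frac{\lambda_{\max}(A_{H_M})}{|\lambda_{\min}(A_{H_M})|}\right)=\min(N-1,M-1)=M-1,$$
using $N>M$ in the last step. Thus $\chi_q(H_N\times H_M)\geq 1+(M-1)=M$.

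The main obstacle is the bookkeeping in the lower bound: one must confirm that the spectral bound of \cite{Priyanga} is applicable to the tensor product of adjacency matrices in its quantum-graph formulation (i.e.\ that the quantum adjacency matrix of the categorical product of classical graphs is indeed $A_{H_N}\otimes A_{H_M}$ in the relevant sense), and that the equality $\lambda_{\max}(A_{H_N})/|\lambda_{\min}(A_{H_N})|=N-1$ is genuinely produced by the character-theoretic computation underlying Theorem \ref{thm:main}, rather than only the weaker inequality $\leq N-1$. Once these two points are verified, the remaining argument is the short algebraic manipulation above together with the elementary upper bound via projection.
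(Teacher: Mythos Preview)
Your proposal is correct and matches the paper's argument in substance: the upper bound via the projection homomorphism is exactly the content of the inequality $\chi_q(G\times H)\le\min\{\chi_q(G),\chi_q(H)\}$ that the paper cites from \cite{desantiago2024quantumchromaticnumbersproducts}, and the lower bound is the Hoffman bound applied to the product eigenvalues, just as in Proposition~\ref{prop:conjCat}. The only cosmetic difference is that you obtain the spectrum of $H_N\times H_M$ directly from $A_{H_N}\otimes A_{H_M}$, whereas the paper re-derives the same product eigenvalues through the conjugacy-class description (Lemma~\ref{lemma:productsDef} and Theorem~\ref{thm:spectrum_Ito}); both of your flagged ``obstacles'' are non-issues here, since the exact ratio $\lambda_{\max}/|\lambda_{\min}|=N-1$ is precisely Theorem~\ref{thm:Hadamard_spectrum}, and for classical graphs the categorical-product adjacency matrix is literally the Kronecker product.
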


The structure of this paper is as follows. In Section \ref{section:background} we provide the necessary background on quantum coloring and quantum graphs, and the various viewpoints on Hadamard graphs. Then, Section \ref{sec:HadamardChromaticNumber} is divided into two main subsections: First, in Section \ref{subsec:upperBounds} we review the proof of the upper bound for the quantum chromatic number of Hadamard graphs, reformulating it in the language of positive operator valued measures. Then, in Section \ref{sec:lowerBounds} we review results of Ito on Hadamard graphs and develop lower bounds, leading to our result. Finally, in Section \ref{sec:Products} we consider products of Hadamard graphs.

\subsection*{Acknowledgements} The author would like to thank Rolando de Santiago for his guidance and help throughout this project. The author would also like to thank Priyanga Ganesan for thoughtful discussions about quantum coloring and non-local games. Additionally, the author would also like to thank Clive Elphick for sharing several proofs of the exact quantum chromatic number of Hadamard graphs in the literature.
Research at Perimeter Institute is supported in part by the Government of Canada through the Department of Innovation, Science and Economic Development and by the Province of Ontario through the Ministry of Colleges and Universities.

\section{Background}\label{section:background}
    \subsection{Quantum Colorings of Graphs}\label{subsec:quantumGraphsAndColorings}
    In a non-local game, cooperating players Alice and Bob play against a referee by answering questions while separated such that no communication is allowed (hence, non-local). The referee will ask both players questions, and determine if they win a given (independent) round based on their answers. Alice and Bob may agree on a strategy ahead of time, and we call a strategy winning if they win with probability 1. There are numerous strategies Alice and Bob may be allowed to use, we are primarily interested in classical (i.e. local) and quantum strategies. Other strategies often studied include quantum approximate, quantum commuting, and non-signaling.

A quantum strategy allows Alice and Bob to share an entangled state $\psi\in \C^a\otimes \C^b$ that they measure with positive operator valued measures (POVMs) $\{E_{x_A,y_A}\}\in M_a$ and $\{F_{x_B,y_B}\} \in M_b$ corresponding to the referee asking questions $x_A$ and $x_B$ to Alice and Bob, respectively, and Alice and Bob responding with $y_A$ and $y_B$ respectively. 
Using this setup, the chance that Alice and Bob respond with $y_A$ and $y_B$ given questions $x_A$ and $x_B$ is 
$$p(y_A, y_B|x_A, x_B) = \langle \psi|E_{x_A, y_A}\otimes F_{x_B, y_B}|\psi\rangle.$$ Alternatively, in a local strategy no entanglement is present. This corresponds to requiring that $\psi$ is a product state in the strategy above. Equivalently, local strategies satisfy $p(y_A, y_B|x_A, x_B) = p_1(y_A|x_A)p_2(y_B|x_B)$ for some probabilities $p_1$ and $p_2$ \cite{fractional}.

The non-local game we will consider in this paper is the graph coloring game. Classically, an $m$-coloring of a graph $G$ is an assignment of a list of colors $m$ colors to $V(G)$ such that adjacent vertices are given different colors. The chromatic number of a graph $\chi(G)$ is the smallest value of $m$ such that the graph has an $m$-coloring. 
In the \textbf{graph coloring game} Alice and Bob want to convince a referee that they have a coloring for a graph. To do so, the referee gives Alice and Bob vertices of the graph $x_A, x_B\in V(G)$, and the players respond with colors $y_A, y_B\in \{1,2,..,c\}$ for the vertices. Alice and Bob have a winning strategy if their responses follow the rules of graph coloring, i.e., if (1) $p(y_A \neq y_B|x_A=x_B) = 0$, and (2) if $p(y_A=y_B|x_A\sim x_B) = 0$. Condition (1) is precisely the definition of a synchronous game. Thus, Bob's POVMs can be obtained from Alice's as described in \cite{og_qchromatic_graph}. Classically, a winning strategy must use at least $\chi(G)$ colors. However, if Alice and Bob use a quantum strategy it may be possible to win using fewer colors. We call the number of colors needed for a winning quantum strategy the \textbf{quantum chromatic} number of a graph, $\chi_q(G)$.

These types of coloring games have also been generalized to quantum graphs in \cite{GanesanHarrisQuantumToClassical}.
One approach to studying coloring on quantum graphs by \cite{GanesanHarrisQuantumToClassical} uses the operator space notion: an (irreflexive) \textbf{quantum graph} is a tuple $(\cS, \cM, \cB(H))$ where $\cM\subset \cB(H)$ is a finite dimensional, von Neumann algebra and $\cS$ is an operator space that is a bimodule over $\cM'$ that satisfies $\cS\perp \cM'$. Here, we consider a \textbf{quantum coloring} to be a projection valued measure (PVM) $\{P_a\}_{a\in [c]}\in \cM\bar{\otimes} \cN$ where $[c]$ are the colors, $\cN$ is a finite dimensional von Neumann algebra, 
% (this can be generalized to finite in the quantum commuting setting), 
and the projections satisfy the coloring relation 
% \begin{align*}
    $$P_a(S\otimes I_\cN)P_a = 0\, \forall a\in \{1,2,...,c\}.$$
% \end{align*}
We may modify the definition to generalize other strategies by changing the conditions on $\cN$ (see \cite{GanesanHarrisQuantumToClassical}).

Another approach to quantum graphs that we will use is the \textbf{quantum adjacency matrix}. We consider a quantum set to be a pair $(\cM, \psi)$ where $\cM$ is a finite dimensional C*-algebra and $\psi:\cM\to \C$ is a faithful state. Note that we may identify $\cM$ with $L^2(\cM)$, where $L^2(\cM) = L^2(\cM, \psi)$ is the GNS completion of $\cM$ with respect to $\psi$. Hence, given $m:\cM\otimes \cM\to \cM$ via multiplication we obtain $m^*$ its adjoint on $L^2(\cM)$. We say $\psi$ is a $\delta$-form if there exists $\delta>0$ such that $mm^* = \delta^2 I$.  Then, the tuple $(\cM, \psi, A)$ is a quantum graph with quantum adjacency matrix $A:L^2(\cM)\to L^2(\cM)$ if $A$ is schur idempotent ($m(A\otimes A)m^* = \delta^2 A$) and satisfies $(I\otimes \eta^*m)(I\otimes A\otimes I)(m^*\eta\otimes I) = A$ where $\eta:\C\to \cM$ via $\lambda \mapsto \lambda I$. An irreflexive quantum graph further satisfies $m(A\otimes I)m^* = 0$. Throughout the remainder of this paper we will assume that all (quantum) graphs are irreflexive as coloring ideas are best applied in this setting. These two equivalent definitions are described and compared in \cite{Priyanga}.

The quantum adjacency matrix approach has allowed many spectral lower bounds to be extended to quantum coloring as was done in \cite{Priyanga}. In particular, we will make use of the bound \textbf{Hoffman bound} for quantum coloring of classical graphs that was found in \cite{spectralBounds_not_priyanga}, phrased here for its generalization to quantum graphs given by \cite{Priyanga}: 
% \begin{thm}[\cite{Priyanga}]\label{thm:Hoffman_priyanga}
Let $\cG = (\cM, \psi, A)$ be a quantum graph, and let $\lambda_{max} = \lambda_1\geq \lambda_2\geq\cdots\geq \lambda_{\dim \cM} = \lambda_{min}$ be all the eigenvalues of $A$ (with multiplicity). Then
    $1 + \frac{\lambda_{max}}{|\lambda_{min}|}\leq \chi_q(\cG).$

% \end{thm}

Both notions of quantum graphs generalize classical graphs, and hence results for quantum graphs specialize to classical graphs. In the operator space approach a classical graph corresponds to a quantum graph where $\cM$ is abelian. Likewise, in the quantum adjacency matrix approach a classical graph corresponds to the usual notion of adjacency matrices, i.e., a $|V|\times |V|$ matrix with $1$ at $ij$th entry if $i\sim j$ in $G$ and zeros elsewhere.

    \subsection{Hadamard Graphs as Orthogonality Graphs}\label{subsec:Hadamard}

    Let $N\in \N$ be a multiple of $4$. The Hadamard graph of size $n=2^N$ is a graph defined as follows: the vertices are vectors of size $N$ with entries $\pm 1$; two vertices are adjacent iff the inner product of the vectors is $0$.  We will denote the vertex $x$ as the vector $(x_i)_{i=0}^{N-1}$ where $x_i$ is the $i$-th component of the vector. In this notation, the vertices $x$ and $y$ are adjacent iff the vectors $(x_i)_{i=0}^{N-1}$ and $(y_i)_{i=0}^{N-1}$ differ in exactly half of the entries. 
Note that this makes the Hadamard graph an orthogonality graph as we are representing the vertices as vectors and edges are determined by the orthogonality relation.

    \begin{remark}\label{rmrk:Hadamard_connected}
    Note that the Hadamard graph defined above is disconnected. In particular, vertices with an even number of $+1$s and $-1$s and those with an odd number form identical connected components. Hence, we may restrict the Hadamard graph to the graph on $2^{N-1}$ vertices correspondong to the connected component containing the all $1$s vector (the component with even numbers of $+1$s and $-1$s).
    \end{remark}

    \begin{remark}\label{rmrk:Hadamard_as_0s_1s}
        It is  convenient to define the POVMs for coloring using a slightly different notation for the Hadamard graph. Namely, the Hadamard graph of size $n=2^N$ is defined as follows: the vertices are vectors of size $N$ with entries $0$s and $1$s; two vertices are adjacent iff exactly half of the vector entries differ.
        Note that this is equivalent to the above definition with $-1$s replaced by $0$s. This convention will be applied for notational convenience.
    \end{remark}

    \subsection{Hadamard Graphs as Conjugacy Class Graphs}\label{subsec:HadamardConj}

    In \cite{Ito_Hadamard_graphs} it was shown that the Hadamard graph can also be viewed as a conjugacy class graph in the sense of \cite{Ito_conjugacy_class_graphs}. In a conjugacy class graph, the vertices may be identified with elements of a group $\Gamma$. Further, there is some $C\subset \Gamma$ that is a union of conjugacy classes, satisfies $C = C^{-1}$ and generates $\Gamma$ such that vertices $x$ and $y$ are adjacent iff $x = cy$ for some $c\in C$.  Characterizing the Hadamard graph as a conjugacy class graph will allow for the use of irreducible characters in order to calculate the spectrum of the adjacency matrix of the Hadamard graph, as will be further developed in Section \ref{sec:lowerBounds}.
    
    To view the Hadamard graph as a conjugacy class graph we will consider the Hadamard graph as described in Remark \ref{rmrk:Hadamard_connected}. Following \cite{Ito_Hadamard_graphs}, we see that the vertices form an abelian group with group multiplication given by componentwise multiplication of the vectors and unit the all $1$s vector $h$. If we let $C$ be the set of vertices adjacent to $h$, then vertices $x$ and $y$ are adjacent iff $x = cy$ for some $c\in C$. We note that $C$ is the union of conjugacy classes since in an abelian group conjugacy classes are single elements. Additionally, $C = C^{-1}$ as each element is its own inverse. To see that $C$ generates the group, let $w(i,j)$ be the vector of all $+1$s except at the $i$th and $j$th spot where there are $-1$s. Observe $\{w(0,1),w(0,2),\ldots,w(0,N)\}$ is a generating set for the group. Further, the elements $w(0,i_1)\cdot w(0,i_2)\cdots w(0,i_{N/2})$ and $w(0,i_2)~\cdot~ w(0,i_3)~\cdots ~w(0,i_{N/2})$, for $0 < i_k < N$ and $i_j\neq i_k$ for all $j,k$, are both in $C$, and we notice that $\binom{N-1}{N/2} + \binom{N-1}{N/2 -1} = \binom{N}{N/2}$ so all elements of $C$ are of this form. Then we notice that the product of these elements is $w(0, i_1)$ for any $i_1\in \{0,\ldots,N-1\}$, so $C$ generates the group.

\section{Computing the Chromatic Number}\label{sec:HadamardChromaticNumber}
    \subsection{Upper Bounds}\label{subsec:upperBounds}

    In this section, we will describe the work of \cite{qcolor_hadamard} which shows that the quantum chromatic number of the Hadamard graph on $2^N$ vertices with $N$ a multiple of $4$ is less than or equal to $N$. Note that, as is noted in \cite{qcolor_hadamard}, this result along with \cite{Hadamard_classical_coloring} implies an exponential gap between the chromatic number and quantum chromatic number of Hadamard graphs with $N=4p^q$. Hence, this work provided an early example of quantum advantage in non-local games.

In the work of \cite{qcolor_hadamard}, the authors described a winning strategy to color the Hadamard graph on $2^N$ vertices using $N$ colors. Given vertices $(a_i)$ and $(b_i)$ in the convention used in Remark \ref{rmrk:Hadamard_as_0s_1s}, Alice and Bob win using the following protocol:
\begin{enumerate}
    \item Prepare an initial state $|\psi\rangle := \frac{1}{\sqrt{N}}\sum_{j=0}^{N-1}|j\rangle \otimes |j\rangle$
    \item Alice and Bob apply phase shifts using $P_{a_i}$ and $P_{b_i}$, where $P_{l_i}|i\rangle = (-1)^{l_i}|i\rangle$. This results in $(P_{a_i}\otimes P_{b_i})|\psi\rangle$.
    \item Apply the general quantum Fourier transform to get 
    $$(QFT_N\otimes QFT_N^{-1})(P_{a_i}\otimes P_{b_i})|\psi\rangle = \frac{1}{N^{3/2}}\sum_{j=0}^{N-1}\sum_{k=0}^{N-1}\sum_{\ell=0}^{N-1}(\omega)^{\ell(j-k)}(-1)^{a_i\oplus b_i}|j\rangle\otimes|k\rangle.$$
    \item Measure the state using this computational basis. This will obtain one of $N$ basis states, and they return the corresponding color to the referee.
\end{enumerate}

We wish to reframe this strategy into the familiar POVM framework of section \ref{subsec:quantumGraphsAndColorings}. Recall that as this is a synchronous game we may assume that Alice and Bob use the same POVM, and also that they are projections. Hence, we must find a set of projections $\{E_{x,\alpha}\}_{x\in V(H_n), \alpha \in [n]}$ such that $\sum_{\alpha\in [n]} E_{x,\alpha} = I$, and such that using these projections and their conjugates for Alice and Bob is a winning strategy (namely, we must check that given adjacent vertices they return different colors).

\begin{thm}[{Theorem 3.1 in \cite{qcolor_hadamard}}]\label{thm:Hadamard_upperbd}
    Let $G$ be the Hadamard graph on $2^{N}$ vertices where $N$ is a multiple of 4.  Then
    $\chi_q(G)\leq N$.
\end{thm}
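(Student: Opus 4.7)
The plan is to repackage the four-step quantum circuit of Avis-Hasegawa-Kikuchi-Sasaki as a single projection-valued measure, and then verify the synchronous winning condition using the $\{0,1\}$ convention of Remark~\ref{rmrk:Hadamard_as_0s_1s}. First I would combine the phase operator and the quantum Fourier transform into a single unitary $U_a := QFT_N \cdot P_a$, and for each vertex $a \in \{0,1\}^N$ and color $\alpha \in \{0,1,\ldots,N-1\}$ define
$$E_{a,\alpha} := U_a^* |\alpha\rangle\langle\alpha| U_a.$$
Each $E_{a,\alpha}$ is automatically a rank-one orthogonal projection since $U_a$ is unitary, and $\sum_{\alpha} E_{a,\alpha} = I$ because $\{|\alpha\rangle\langle\alpha|\}_{\alpha}$ is a PVM. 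This will be Alice's measurement, with Bob using the transpose $E_{a,\alpha}^T$ as dictated by synchronicity with the maximally entangled state $|\psi\rangle = \frac{1}{\sqrt{N}}\sum_{j=0}^{N-1}|j\rangle \otimes |j\rangle$.

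A short computation would produce the explicit matrix entries
$$(E_{a,\alpha})_{jk} = \frac{1}{N}(-1)^{a_j+a_k}\,\omega^{\alpha(k-j)}, \qquad \omega = e^{2\pi i/N}.$$
Because the strategy is synchronous, verifying a winning $N$-coloring reduces to checking $E_{a,\alpha} E_{b,\alpha} = 0$ whenever $a \sim b$ in $H_N$; the condition $E_{a,\alpha} E_{a,\beta} = 0$ for $\alpha \neq \beta$ is automatic from the PVM property. A direct matrix multiplication gives
$$(E_{a,\alpha} E_{b,\alpha})_{jk} = \frac{1}{N^2}(-1)^{a_j+b_k}\,\omega^{\alpha(k-j)} \sum_{\ell=0}^{N-1}(-1)^{a_\ell + b_\ell},$$
and the inner sum equals $|\{\ell : a_\ell = b_\ell\}| - |\{\ell : a_\ell \neq b_\ell\}|$. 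By Remark~\ref{rmrk:Hadamard_as_0s_1s}, adjacency in $H_N$ means $a$ and $b$ differ in exactly $N/2$ coordinates, so this sum vanishes and $E_{a,\alpha} E_{b,\alpha} = 0$ as required.

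The heart of the argument is really just the identity $\sum_\ell (-1)^{a_\ell+b_\ell} = 0$ whenever $a \sim b$, which is the Hadamard adjacency expressed as a Fourier-style orthogonality relation, so I do not expect a genuine obstacle. The main thing to be careful about is bookkeeping: checking that the computational-basis measurement in step (4) of the AHKS circuit really does correspond to the PVM $\{E_{a,\alpha}\}$, and confirming that with Bob using the transpose PVM the bipartite probability $\langle \psi | E_{a,\alpha} \otimes E_{b,\alpha}^T | \psi\rangle = \frac{1}{N}\Tr(E_{a,\alpha} E_{b,\alpha})$ vanishes on adjacent pairs. Together with the manifest fact that this uses only $N$ colors, this yields $\chi_q(H_N) \leq N$.
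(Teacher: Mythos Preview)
Your proposal is correct and is essentially the same argument as the paper's: the projections $E_{a,\alpha}=U_a^*|\alpha\rangle\langle\alpha|U_a$ coincide exactly with the paper's $E_x^\alpha=|\widetilde{E}_x^\alpha\rangle\langle\widetilde{E}_x^\alpha|$ (compare matrix entries), and both verifications reduce to the identity $\sum_\ell(-1)^{a_\ell+b_\ell}=0$ for $a\sim b$. Your packaging is marginally cleaner in that the PVM property is automatic from unitarity and you check the operator identity $E_{a,\alpha}E_{b,\alpha}=0$ directly rather than the bipartite correlation, but the mathematical content is identical.
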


\begin{proof}

We will adopt the convention for Hadamard graphs used in Remark \ref{rmrk:Hadamard_as_0s_1s}.
Fix $\omega$, a primitive root of unity. For each $\alpha\in [N]$ $x\in V(G)$,  define $$\langle \widetilde{E}_x^\alpha| = \frac{1}{\sqrt{N}}\sum_{j=0}^{N-1}\omega^{j\alpha}(-1)^{x_j}\langle j|$$
    and set $E_x^\alpha =| \widetilde{E}_x^a\rangle \langle \widetilde{E}_x^\alpha| $. Note that since  $\widetilde{E}_x^a$ is a unit vector, it is readily checked that $E_x^\alpha$ is a projection for each $x\in V(G)$ and $\alpha\in[N] $. 
    Consider the shared entangled state $|\psi \rangle =\frac{1}{\sqrt{N}} \sum_{j=0}^{N-1}|j\rangle \otimes |j\rangle$
We now verify that $\{E_x^\alpha\}_{\alpha\in [N]}$ forms a PVM for each $x\in V(G)$.
To this end, 
\begin{align*}
    \sum_{\alpha = 1}^N E_x^\alpha &= \sum_{\alpha=1}^N \frac{1}{N}\sum_{j,k=0}^{N-1}\omega^{(j-k)\alpha}(-1)^{x_j + x_k}|k\rangle \langle j|\\
    &=\left( \frac{1}{N}\sum_{j,k=0}^{N-1}(-1)^{x_j + x_k}|k\rangle \langle j|\right)\sum_{\alpha=1}^N \omega^{(j-k)\alpha}\\
    &= \sum_{j=0}^{N-1} |j\rangle \langle j| = I
\end{align*}
where in the second to the last line, we used the following facts: 
 if $j=k$ then $(-1)^{x_j + x_k} = 1$ and $\sum_{\alpha=1}^N \omega^{0} = N$; and if $j\neq k$ then we get $\sum_{\alpha=1}^N \omega^{(j-k)\alpha} = 0$.

Next, we verify the coloring condition. First we calculate,
\begin{align*}
    \langle \psi| E_x^\alpha \otimes \overline{E_y^\beta}|\psi\rangle =& \frac{1}{N^3}\sum_{\ell=0}^{N-1}\langle \ell|\otimes \langle \ell|\left(\sum_{j,k=0}^{N-1} \omega^{(j-k)\alpha}(-1)^{x_j + x_k }|k\rangle \langle j|\right) \otimes \\ &\left(\sum_{j',k'=0}^{N-1} \omega^{(k'-j')\beta}(-1)^{y_{j'} + y_{k'} }|k'\rangle \langle j'|\right)\sum_{\ell=0}^{N-1}| \ell'\rangle \otimes | \ell'\rangle\\
    =& \frac{1}{N^3}\sum_{\ell,\ell'=0}^{N-1}\left( \omega^{(\ell'-\ell)\alpha}(-1)^{x_{\ell'} + x_\ell }\right)\left(\omega^{(\ell-\ell')\beta}(-1)^{y_{\ell'} + y_{\ell} }\right)\\
        =& \frac{1}{N^3}\sum_{\ell, \ell'=0}^{N-1} \omega^{(\ell' - \ell)(\alpha - \beta)}(-1)^{x_\ell + x_{\ell'} + y_{\ell} + y_{\ell}'}
\end{align*}
Thus we have
\begin{equation}\label{eq:PVMComputation}
     \langle \psi| E_x^\alpha \otimes \overline{E_y^\beta}|\psi\rangle =\frac{1}{N^3}\sum_{\ell = 0}^{N-1}\omega^{\ell'(\alpha - \beta)}(-1)^{x_{\ell'} + y_{\ell'}}\sum_{\ell'=0}^{N-1}\omega^{\ell(\beta-\alpha)}(-1)^{x_\ell + y_\ell}
\end{equation}
Now we wish to verify that if $x\sim y$ and $\alpha=\beta$ then $ \langle \psi| E_x^\alpha \otimes \overline{E_y^\beta}|\psi\rangle $ evaluates to $0$. Using these assumptions in Equation \eqref{eq:PVMComputation}, we find
\begin{align*}
  \langle \psi| E_x^\alpha \otimes \overline{E_y^\beta}|\psi\rangle =&= \frac{1}{N^3}\sum_{\ell = 0}^{N-1}(-1)^{x_\ell + y_\ell}\sum_{\ell'=0}^{N-1}(-1)^{x_{\ell'} + y_{\ell'}} = 0
\end{align*}
since exactly half of the $x_\ell + y_\ell$ are even and exactly half are odd when $x\sim y$.

Finally, since Bob's POVMs are found from Alice's POVMs as described in \cite{og_qchromatic_graph} this must be a synchronous game. Hence, it is easily checked that if $\alpha\neq \beta$ then
\begin{align*}
    \langle \psi| E_x^\alpha \otimes \overline{E_x^\beta}|\psi\rangle = 0.
\end{align*}
\end{proof}

\begin{remark}
    
We obtain the PVM corresponding to colors used in coloring quantum graphs as in \cite{GanesanHarrisQuantumToClassical} by considering the following block diagonal matrix, $$P_\alpha = \begin{pmatrix}
    E_1^\alpha & 0 &\cdots  &0 \\
    0& E_2^\alpha &\cdots &0 \\
    \vdots &\vdots& \ddots&\vdots \\
   0 &0 &\cdots &  E_n^\alpha   
\end{pmatrix},$$ where the vertices are labeled using the set $[n]$. Notice that we have $P_\alpha \in D_n\otimes M_N(\C)$ as $E_x^\alpha \in M_N(\C)$, so as $M_N(\C)$ is finite dimensional this is a quantum coloring in the above description as expected. 

The verification that $\{P_\alpha\}_\alpha$ forms a PVM is is immediate from the result for $\{E_x^\alpha\}_\alpha$, and the coloring condition may be checked a quick calculation verifying that
$$P_\alpha (E_{x,y}\otimes I_N)P_\alpha = 0$$
whenever $x\sim y$, where $E_{x,y}\in M_n(\C)$ is the elementary matrix with a $1$ at the $xy$ entry and zero elsewhere.
\end{remark}

\subsection{Lower Bounds}\label{sec:lowerBounds}

    We will now derive a lower bound for the quantum chromatic number of the Hadamard graph. In particular, we will use the Hoffman bound that was shown in \cite{Priyanga} using eigenvalues of the quantum adjacency matrix. As this is a classical graph, the quantum adjacency matrix is precisely the usual concept of an adjacency matrix. Hence, we will use the results of Ito in \cite{Ito_conjugacy_class_graphs} and \cite{Ito_Hadamard_graphs} in order to compute the eigenvalues by viewing the Hadamard graph as a conjugacy class graph.

    We begin by noting that conjugacy class graphs are determined by a group and a (union of) conjugacy classes of the group. Hence, representation theory is useful in studying such graphs, and we shall review some of the relevant definitions. In particular, a representation of a group $\Gamma$ on a vector space $V$ is a group homomorphism $\rho:\Gamma \to GL(V)$. A representation is \textit{irreducible} if there are no invariant subspaces of $\rho(\Gamma)$ except for $\{0\}$ and $V$ itself. If $V$ is finite dimensional (as will be considered here), then the \textit{character} corresponding to $\rho$ is $\chi_\rho:\Gamma\to \C$ via $\chi_{\rho}(g) = Tr(\rho(g))$ for any $g\in \Gamma$. A character $\chi_\rho$ is called irreducible if $\rho$ is irreducible. 
    Characters possess several properties that will be used in the following:
    \begin{itemize}
        \item Characters are constant on conjugacy classes of $\Gamma$ since $Tr(ab)=Tr(ba)\implies Tr(aba^{-1}) = Tr(b)$.

        \item The set of class functions (functions that take a constant value on conjugacy classes) has a natural inner product:$            \langle \alpha, \beta\rangle = \frac{1}{|\Gamma|}\sum_{g\in \Gamma}\alpha(g)\overline{\beta(g)}.$
         Further, if $\alpha$ and $\beta$ are characters corresponding to a finite dimensional representation, and $\Gamma$ is of finite order then
        \begin{align}\label{eqn:innerproduct}
            \langle \alpha, \beta\rangle = \frac{1}{|\Gamma|}\sum_{g\in \Gamma}\alpha(g)\beta(g^{-1}),
        \end{align}
        because finite order invertible linear transformations have the trace of their inverse is the complex conjugate of the trace.

        \item The set of irreducible characters forms an orthonormal basis for the class functions. In particular, if $\rho = \rho_1\oplus \rho_2$ is a reducible representation, then $\chi_\rho = \chi_{\rho_1} + \chi_{\rho_2}$.

        \item For $h\in \Gamma$ and $\Gamma$ of finite order, we will also use 
        \begin{align}\label{eqn:diffCharRelation}
            \sum_{g\in \Gamma}\chi_s(g^{-1})\chi_r(gh) = |\Gamma|\frac{\chi_r(h)}{\chi_r(e)}\delta_{r,s}
        \end{align}
        for any two irreducible characters $\chi_r$ and $\chi_s$.

        \item An important example of a representation is the regular representation, defined by $R(g):\ell^2(\Gamma)\to \ell^2(\Gamma)$ via $R(g)e_h = e_{gh}$ for all $g,h\in \Gamma$ and $\{e_g\}$ the basis for $\ell^2(\Gamma)$. Note that $\ell^2(\Gamma) \cong \C^{|\Gamma|}$ when $\Gamma$ is of finite order. Then if we let $\gamma$ be the character corresponding to $R$, we find $\gamma(g) = |\Gamma|\delta_{g,e}$, since $R(g)$ is a permutation matrix with zero trace unless $g=e$. Further, $\langle \gamma, \chi\rangle = \frac{1}{|\Gamma|}\sum_{g\in \Gamma}\gamma(g)\overline{\chi(g)} = \frac{1}{|\Gamma|}\gamma(e)\chi(e) = \chi(e)$, so if $\{\chi_i\}_{i=1}^k$ is the set of irreducible representations of $\Gamma$ then \begin{align}\label{eqn:regrep_irreps}
            \gamma = \sum_{i=1}^k \chi_i(e)\chi_i.
        \end{align}
        Hence, we find $|\Gamma| = \sum_{i=1}^k\chi_i(e)^2$
    \end{itemize}

    The proof of the following theorem is contained within \cite{Ito_conjugacy_class_graphs}. We include it for completeness.
    \begin{thm}[\cite{Ito_conjugacy_class_graphs}]\label{thm:spectrum_Ito}
        Suppose $G$ is a conjugacy class graph defined using group $\Gamma$ and conjugacy class (or inverse closed union of conjugacy classes) $C$. Further, suppose that $\chi$ is an irreducible character of $\Gamma$. Then $\chi$ contributes to the spectrum of $G$ an eigenvalue $\lambda = |C|\chi(c)/\chi(e)$ (or $\lambda = \sum_{c\in C}\chi(c)/\chi(e)$), where $c\in C$, with multiplicity $\chi(e)^2$. (Note that distinct characters may add to the multiplicity of a single eigenvalue).
    \end{thm}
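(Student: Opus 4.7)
The plan is to realize the adjacency matrix $A$ of the conjugacy class graph as an element of the group algebra $\C[\Gamma]$ acting through the regular representation $R$, and then to apply Schur's lemma. Identifying $V(G)$ with $\Gamma$ and using the basis $\{e_g\}_{g\in\Gamma}$ of $\ell^2(\Gamma)$, the adjacency relation ``$x=cy$ for some $c\in C$'' becomes $A e_y = \sum_{c\in C} e_{cy}$, so that $A = \sum_{c\in C} R(c) = R(z)$ for the element $z := \sum_{c\in C} c\in \C[\Gamma]$. The hypothesis that $C$ is an inverse-closed union of conjugacy classes is exactly what ensures that $z$ is central in $\C[\Gamma]$: for every $g\in \Gamma$, conjugation by $g$ merely permutes $C$, so $gzg^{-1}=z$.

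Next I would decompose $\ell^2(\Gamma)$ into isotypic components according to the irreducible representations $\rho_1,\ldots,\rho_k$ with characters $\chi_1,\ldots,\chi_k$. By \eqref{eqn:regrep_irreps}, the regular representation satisfies $R \cong \bigoplus_i \chi_i(e)\,\rho_i$, so each irreducible $V_i$ appears with multiplicity $\chi_i(e)=\dim V_i$ and the $i$-th isotypic subspace $W_i\subset \ell^2(\Gamma)$ has dimension $\chi_i(e)^2$. Because $z$ is central, Schur's lemma forces $\rho_i(z)$ to act on $V_i$ as a scalar $\lambda_i$, hence $A=R(z)$ acts as $\lambda_i I$ on all of $W_i$, contributing $\lambda_i$ to the spectrum of $A$ with multiplicity $\chi_i(e)^2$.

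To pin down $\lambda_i$, I would compute the trace of $\rho_i(z)$ on $V_i$ in two ways: on one hand it equals $\lambda_i \chi_i(e)$, and on the other hand it equals $\sum_{c\in C}\chi_i(c)$, yielding $\lambda_i = \sum_{c\in C}\chi_i(c)/\chi_i(e)$. When $C$ is a single conjugacy class, $\chi_i$ is constant on $C$ and this simplifies to $|C|\chi_i(c)/\chi_i(e)$ for any $c\in C$. Since the $W_i$ exhaust $\ell^2(\Gamma)$, every eigenvalue of $A$ arises this way, and the parenthetical remark in the statement simply reflects that two distinct irreducible characters can yield the same scalar $\lambda_i$, in which case their multiplicities $\chi_i(e)^2$ add. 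The conceptually essential step is the centrality of $z$, which is what allows Schur's lemma to deliver a scalar action; beyond that, the only subtlety is to distinguish the multiplicity $\chi_i(e)$ of the irreducible $V_i$ inside $R$ from the dimension $\chi_i(e)^2$ of the full isotypic block $W_i$, the latter being the multiplicity recorded in the theorem.
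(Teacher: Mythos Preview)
Your proof is correct and is a genuinely different---and more conceptual---route than the paper's. The paper never invokes Schur's lemma or the centrality of $z=\sum_{c\in C}c$ explicitly; instead it expands the class indicator $\delta_C$ in the basis of irreducible characters to write $A=\frac{|C|}{g}\sum_\ell \chi_\ell(c)D_\ell$ with $D_\ell=(\chi_\ell(x_ix_j^{-1}))_{i,j}$, then shows by direct computation (using the orthogonality relation \eqref{eqn:diffCharRelation}) that each row of $D_s$ is an eigenvector of $A$ with eigenvalue $|C|\chi_s(c)/\chi_s(e)$, and finally establishes that $D_s$ has rank exactly $\chi_s(e)^2$ by bounding it from above via matrix coefficients and from below via the decomposition of the regular character. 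Your argument packages all of this into the single observation that a central element acts as a scalar on each irreducible summand, and reads off the multiplicity from the dimension of the isotypic block; this is shorter and makes the structural reason for the result transparent. What the paper's approach buys in exchange is an explicit basis of eigenvectors (the rows of the $D_s$), which is not needed for the applications here but could be useful elsewhere.
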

    \begin{proof} 

    In this proof we shall assume that $C$ is a single conjugacy class for ease of notation. If $C$ is an inverse closed union of conjugacy classes then the proof follows identically but with $|C|$ replaced by $\sum_{c\in C}$ throughout.
    
        Let $V(G) = \{x_1,\ldots,x_g\}$ with $g = |\Gamma|$ as vertices are identified with elements of $\Gamma$. Further, let $\delta_C(z) = 1$ or $0$ depending on if $z\in C$ or not. Then we see that the adjacency matrix of $G$ is $A = (\delta_C(x_ix_j^{-1}))_{i,j\leq 1}^g.$
        
        Since $\delta_C$ is a class function it is a $\C$ linear combination of irreducible characters of $\Gamma$, i.e.~$\delta_C = \sum_{i = 1}^k a_i\chi_i,$ where $a_i\in \C$ and $\chi_i$, $1\leq i\leq k$, are the irreducible characters. Then orthogonality of irreducible characters using the inner products in Equation \eqref{eqn:innerproduct} gives $$g\langle \delta_C, \chi_i\rangle = a_ig = \sum_{j=1}^g\delta_C(x_j)\chi_i(x_j^{-1}) = \sum_{c\in C} \chi_i(c^{-1}) = |C|\chi_i(c)$$ for $c\in C$. Hence, $a_i = |C|\chi_i(c)/g$.

        Now, letting $D_\ell = (\chi_\ell(x_ix_j^{-1}))_{i,j=1}^g$ we get $A = \frac{|C|}{g}\sum_{\ell=1}^k\chi_\ell(c)D_\ell.$ Additionally, let $X_{s,1} = \left(\chi_s(x_1^{-1}),\ldots,\chi_s(x_g^{-1})\right)$ for $1\leq s\leq k$. Then $X_{s,1}A =  \frac{|C|}{g} \sum_{\ell=1}^k \chi_\ell(c)X_{s,1}D_\ell$.
        
        The relation of group characters given in Equation \eqref{eqn:diffCharRelation} gives us the $j$th component of $X_{s,1}D_\ell$ equals $$\sum_{i=1}^g\chi_s(x_i^{-1})\chi_\ell(x_ix_j^{-1}) = \left(\chi_s(x_j^{-1})/\chi_s(e)\right)g\delta_{s,\ell}.$$ Hence, the $j$th component of $X_{s,1}A$ is $|C|\left(\chi_s(c)/\chi_s(e)\right)\chi_s(x_j^{-1})$, and we see that $X_{s,1}$ is an eigenvector of $A$ with eigenvalue $|C|\chi_s(c)/\chi_s(e).$

        We may further notice that $X_{s,1}$ is the first row vector of $D_s$, so let $X_{s,m}$ be the $m$th row vector of $D_s$. Similarly, we find these are eigenvectors of $A$ with eigenvalues $|C|\chi_s(c)/\chi_s(e)$ as well. 
        % Further, we have $\sum_{i=1}^g\chi_s(x_mx_i^{-1})\chi_\ell(x_ix_j^{-1}) = g(\chi_s(x_mx_j^{-1})/\chi(e))\delta_{s,\ell},$
        Additionally, we have 
        $$X_{s,l}\cdot X_{t,m} = \sum_{i=1}^g \chi_s(x_\ell x_i^{-1})\chi_t(x_i x_m^{-1}) = \sum_{i=1}^g \chi_s(x_i^{-1})\chi_t(x_ix_\ell x_m^{-1}) = 0$$ if $s\neq t$. Hence, we complete the proof if we can show that $D_s$ has rank $\chi_s(e)^2$ for $1\leq s\leq k$.

        Let $R$ be the regular representation of $\Gamma$, and $\gamma$ the character of $R$. 
        % Then $\gamma(z) = g$ or $0$ according to whether $z = e$ or not (unnormalized trace, but same idea as usual vNa stuff).
        Using Equation \eqref{eqn:regrep_irreps} we get $(\gamma(x_ix_j^{-1})) = \sum_{\ell=1}^k \chi_\ell(e)D_\ell$. If we let $\{e_i\}_{i=1}^g$ be the standard basis vectors for $\C^g$, then this gives $ge_i = \sum_{\ell=1}^k\chi_\ell(e)X_{i,\ell}$. Hence, $\{X_{s,i}\}_{s,i=1}^{s=k, i=g}$ generates $\C^g$, and since $g = \sum_{\ell=1}^k \chi_\ell(e)^2$ it suffices to show that the rank of $D_\ell$ does not exceed $\chi_\ell(e)^2$.

        Let $R_\ell(x) = (a_{rs}^\ell(x))_{r,s=1}^{\chi_\ell(e)}$ for $x\in \Gamma$ be an irreducible representation of $\Gamma$ corresponding to the character $\chi_\ell$. Further, let $$A_i^{(\ell)} = \begin{pmatrix}
            a_{i1}^\ell(x_1) & \cdots & a_{i\chi_\ell(e)}^\ell(x_1)\\
            \vdots & & \vdots\\
            a_{i1}^\ell(x_g) & \vdots & a_{i\chi_\ell(e)}^\ell(x_g)
        \end{pmatrix}\begin{pmatrix}
            a_{1i}^\ell(x_1^{-1}) & \cdots & a_{1i}^\ell(x_g^{-1})\\
            \vdots & & \vdots\\
            a_{\chi_\ell(e)i}^\ell(x_1^{-1}) & \vdots & a_{\chi_\ell(e)}^\ell(x_g^{-1})
        \end{pmatrix}$$
        for $1\leq i\leq \chi_\ell(e)$. We have $\chi_\ell(x_jx_i^{-1}) = \sum_{r,t=1}^{\chi_\ell(e)}a_{rt}^\ell(x_i)a_{tr}^\ell(x_j^{-1})$. Hence, we find $D_\ell = A_1^{(\ell}) +\cdots + A_{\chi_\ell(e)}^{(\ell)}$. Since the rank of $A_i^{(\ell)}$ cannot exceed $\chi_\ell(e)$ we get the rank of $D_\ell$ cannot exceed $\chi_\ell(e)^2$.
    \end{proof}

        Applying the above theorem to the Hadamard graph, we obtain the following result. Note that this result is contained in \cite{Ito_Hadamard_graphs} Proposition 5. However, we restrict our attention to only the largest and smallest eigenvalues, as only these are needed to apply the Hoffman bound. The proof of the following is also found in \cite{Ito_Hadamard_graphs}. We include a sketch of the relevant portions of the proof for completeness.
        
    \begin{thm}[\cite{Ito_Hadamard_graphs}, Part of Proposition 5]\label{thm:Hadamard_spectrum}
    Let $A$ be the adjacency matrix for the Hadamard graph on $2^N$ vertices where $N$ is a multiple of 4. Then the largest eigenvalue of $A$ is $\lambda_{max} = \binom{N}{N/2}$ and the smallest eigenvalue is $\lambda_{min} = -\frac{\lambda_{max}}{N-1}.$
    \end{thm}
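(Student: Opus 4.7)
The plan is to apply Theorem~\ref{thm:spectrum_Ito} to the conjugacy class realization of $H_N$ from Section~\ref{subsec:HadamardConj}. Since the underlying group $\Gamma$ is abelian of order $2^{N-1}$, every irreducible character is one-dimensional with $\chi(e)=1$, so Ito's theorem collapses to the standard Cayley-graph eigenvalue formula: each $\chi$ contributes an eigenvalue $\lambda_\chi = \sum_{c\in C}\chi(c)$ (with multiplicity one). The characters themselves are the restrictions to $\Gamma$ of the familiar characters $\chi_T(c) = \prod_{i\in T} c_i$ of the full sign group $\{\pm 1\}^N$, parametrized by subsets $T \subseteq \{0,\ldots,N-1\}$, with $T$ and $T^c$ yielding the same character on $\Gamma$.

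First, I would reduce the eigenvalue to a Krawtchouk polynomial value. Grouping the vectors $c \in C$ (those with exactly $N/2$ coordinates equal to $-1$; note $4 \mid N$ is used here to ensure $C \subseteq \Gamma$) by $j = |\{i \in T : c_i = -1\}|$ gives, for $|T| = k$,
\[
\lambda_T \;=\; K(k) \;:=\; \sum_{j=0}^{N/2}(-1)^j \binom{k}{j}\binom{N-k}{N/2-j}.
\]
Plugging in $k=0$ yields $K(0) = \binom{N}{N/2}$, and since this equals the valency of the regular graph $H_N$, it is automatically $\lambda_{\max}$ by Perron--Frobenius. Plugging in $k=2$ and simplifying using $\binom{N-2}{N/2}=\binom{N-2}{N/2-2}$ together with the ratio $\binom{N-2}{N/2-1}/\binom{N-2}{N/2}=(N/2)/(N/2-1)$ gives $K(2) = -\binom{N}{N/2}/(N-1)$, the candidate for $\lambda_{\min}$.

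Finally, I would verify that no $K(k)$ is smaller than $K(2)$. The odd cases $K(k) = 0$ follow from a clean parity argument: the involution $c \mapsto -c$ preserves $C$ (negating a weight-$N/2$ vector yields another) and multiplies $\chi_T(c)$ by $(-1)^{|T|}$, so for odd $k$ the sum $\sum_{c\in C}\chi_T(c)$ equals its own negative and must vanish. The hard part is the remaining even $k \ge 4$, where no symmetry collapses the expression: one must either invoke the three-term Krawtchouk recurrence to tabulate $K(k)$ inductively, or bound $|K(k)|$ via generating-function estimates, and verify $|K(k)| \le |K(2)|$ in every case. This is the step I expect to be the main obstacle, and it is handled case-by-case in~\cite{Ito_Hadamard_graphs}. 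Once done, $\lambda_{\min} = K(2) = -\binom{N}{N/2}/(N-1)$ holds for the connected component; and since $H_N$ on $2^N$ vertices is two isomorphic disjoint copies of this component, its extreme eigenvalues are identical.
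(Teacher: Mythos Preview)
Your proposal is correct and follows the same approach as the paper's sketch of Ito's argument: apply Theorem~\ref{thm:spectrum_Ito} to the abelian group $\Gamma$, write each eigenvalue as $K(k)=\sum_j(-1)^j\binom{k}{j}\binom{N-k}{N/2-j}$ indexed by the weight $k$ of the character, and identify the extremes at $k=0$ and $k=2$. The one step you leave open---showing $|K(k)|\le |K(2)|$ for even $k\ge 4$---is handled in the paper not case-by-case but via Nomura's closed form $K(k)=(-1)^{k/2}\,k!\,(N-k)!\big/\big((N/2)!\,(k/2)!\,((N-k)/2)!\big)$, whose ratio $K(k+4)/K(k)=(k+3)(k+1)\big/\big((N-k-1)(N-k-3)\big)$ makes the extremality at $k=2$ (equivalently $k=N-2$, matching the paper's statement) immediate.
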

    \begin{proof}
        As was shown in Section \ref{subsec:Hadamard}, we may view the Hadamard graph as a conjugacy class graph for an abelian group $\Gamma$ and picking the union of conjugacy classes to be the set of vertices adjacent to the all $1$s vertex $h$, $D_1(h)$. Hence, we need only understand the characters of $\Gamma$ and their values on the $D_1(h)$ by Theorem \ref{thm:spectrum_Ito}. Further, as $\Gamma$ is abelian the characters are precisely $1$-dimensional representations, and for any character $\chi$,  $\chi(e) = 1$.

        Recall that we denote the vector of all ones except at the $i$th and $j$th spots by $w(i,j)$, and that the set $\{w(0,1),\ldots,w(0,N-1)\}$ forms a generating set for $\Gamma$. Then define the character $\chi_{0,i}$, $1\leq i\leq N-1$, by $\chi_{0,i}(w(0,i)) = -1$ and $\chi_{0,1}(w(0,j)) = 1$ for $i\neq j$. Then $\{\chi_{0,1},\ldots,\chi_{0,N-1}\}$ forms a generating system for the character group of $\Gamma$. 
        
        Let $\chi_0$ be the identity character. Then $\chi_0(c) = 1$ for any $c\in D_1(h)$, so we get the eigenvalue associated to $\chi_0$ is $|D_1(h)| = \binom{N}{N/2}$. This is the maximal eigenvalue as it is clear from the generating set of characters  that no character will evaluate to value larger than $1$.

        It can be shown that any product of $r$ distinct characters in the generating set, say $\chi = \chi_{0,i_1}\cdots\chi_{0,i_r}$, will contribute $0$ to the spectrum of $A$ if $r$ is odd. Thus, we now the case when
        % Next, we show that any odd product of $r$ distinct characters in the generating set, say $\chi = \chi_{0,i_1}...\chi_{0,i_r}$, will contribute $0$ to the spectrum of $A$. To see this first notice that $\chi$ will take the value $-1$ at elements of $D_1(h)$ given by an odd number of elements of $\{w(0,i_1),...,w(0,i_r)\}$. Hence, we find 
        % $$\sum_{v\in D_1(h)} \chi(v) = \binom{N}{N/2} - 2\sum_{i\text{ odd}} \binom{r}{i}\binom{N-r}{(N/2)-i} = 0$$
        % where the final equality comes from noting that $r-i$ is even iff $i$ is odd so the term within the second sum is the same for $i$ odd or even being summed over.
        % Finally, for 
        $2\leq r\leq N$ even. Let the product of $r$ distinct characters in the generating set be $\chi = \chi_{0,i_1}\cdots\chi_{0,i_r}$. This character contributes the eigenvalue
        \begin{align*}
          \lambda(r) =& \sum_{v\in D_1(h)} \chi(v)\\
          =& \binom{N}{N/2}-2\sum_{i \text{ odd}}\binom{r}{i}\binom{n-r}{(N/2)-i}\\
          =& a_{N,r}(N/2) = (-1)^{r/2}\frac{r!(N-r)!}{(N/2)!(r/2)!((N-r)/2)!}
        \end{align*}
        where $a_{N,r}(N/2)$ is the coefficient of $x^{N/2}$ in $(1-x)^r(1+x)^{N-r}$, and the last equality is a result of K.~Nomura as described in \cite{Ito_conjugacy_class_graphs}. Moreover $\lambda(r+4)/\lambda(r) = (r+3)(r+1)/(n-r-1)(n-r-3)$, so a quick calculations shows that picking $r=N-2$ gives $\lambda(r) = -\frac{\lambda(N)}{N-1}$ is the minimum eigenvalue.
    \end{proof}

    Then combining these spectral results for Hadamard graphs in Theorem \ref{thm:Hadamard_spectrum} with the Hoffman lower bound from \cite{spectralBounds_not_priyanga} and \cite{Priyanga} gives the main result.  
    \begin{cor}\label{cor:main}
        For $N$ a multiple of $4$, let $H_N$ be the Hadamard graph on $2^N$ vertices. Then  
        $\chi_q(H_N) = N.$
    \end{cor}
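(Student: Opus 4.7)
The plan is to obtain $\chi_q(H_N) = N$ as a sandwich between the upper bound already established in Theorem \ref{thm:Hadamard_upperbd} and a matching lower bound coming from the Hoffman-type spectral inequality recalled in Section \ref{subsec:quantumGraphsAndColorings}. Since each direction has already been reduced to concrete statements earlier in the excerpt, the proof should amount to plugging numbers into the Hoffman bound.

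First, I would invoke Theorem \ref{thm:Hadamard_upperbd}, which produces an explicit PVM strategy showing that $\chi_q(H_N) \leq N$. This handles the upper direction with no further work.

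For the lower direction, the plan is to apply the Hoffman bound $1 + \lambda_{\max}/|\lambda_{\min}| \leq \chi_q(\mathcal{G})$ (valid here because $H_N$ is a classical graph, which is a special case of the quantum graph setting in which the bound was stated). Theorem \ref{thm:Hadamard_spectrum} gives $\lambda_{\max} = \binom{N}{N/2}$ and $\lambda_{\min} = -\lambda_{\max}/(N-1)$, so the ratio collapses cleanly:
\begin{equation*}
1 + \frac{\lambda_{\max}}{|\lambda_{\min}|} = 1 + (N-1) = N.
\end{equation*}
Hence $\chi_q(H_N) \geq N$, and combining with the upper bound completes the proof.

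There is essentially no obstacle at this stage: all of the substantive work, namely the construction of the winning quantum strategy on $N$ colors and the spectral computation via the conjugacy-class-graph viewpoint, has been carried out in Theorems \ref{thm:Hadamard_upperbd} and \ref{thm:Hadamard_spectrum}. The only thing to verify is that the Hoffman bound from \cite{Priyanga} applies to $H_N$ with its ordinary adjacency matrix, which follows from the remark in Section \ref{subsec:quantumGraphsAndColorings} identifying classical graphs with quantum graphs whose underlying algebra is abelian.
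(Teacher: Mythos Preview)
Your proposal is correct and follows exactly the same approach as the paper: invoke Theorem~\ref{thm:Hadamard_upperbd} for the upper bound and plug the eigenvalues from Theorem~\ref{thm:Hadamard_spectrum} into the Hoffman bound to obtain $1 + \lambda_{\max}/|\lambda_{\min}| = N$ for the lower bound. The paper's own proof is in fact just the single sentence preceding the corollary, so your write-up is, if anything, more explicit.
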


\section{Products of Hadamard graphs}\label{sec:Products}

There are four standard graph products. Namely, the categorical, Cartesian, and strong products which are all commutative, as well as the lexicographic product which is not. For graphs $G$ and $H$, each of these products forms a graph on the vertex set $V(G)\times V(H)$ as follows:
\begin{itemize}
    \item The \textbf{categorical product} $G\times H$ has $(v,w)\sim (x,y)\in V(G)\times V(H)$ if and only if $v\sim x\in G$ and $w\sim y$.
    \item The \textbf{Cartesian product} $G\square H$ $(v,w)\sim(x,y)\in V(G)\times V(H)$ if and only if $v=x$ and $w\sim y$ or $v\sim x$ and $w=y$.

    \item The \textbf{strong product} $G\boxtimes H$ has $(v,w)\sim(x,y)\in V(G)\times V(H)$ if and only if $v\sim x$ and $w=y$, $v=x$ and $w\sim y$, or $v\sim x$ and $w\sim y$.

    \item The \textbf{lexicographic product} $G[H]$ has $(v,w)\sim(x,y)\in V(G)\times V(H)$ if and only if $v\sim x$ or $v=x$ and $w\sim y$.
\end{itemize}

We may consider graph products of conjugacy class graphs $G$ and $H$ corresponding to groups $\Gamma_G$ and $\Gamma_H$ and (unions of) conjugacy classes $C_G\subset \Gamma_G$ and $C_H\subset \Gamma_H$. We immediately see that any of the above graph products may be viewed as conjugacy class graphs on the group formed by the direct product $\Gamma_G\times \Gamma_H$. Noting that the identity is a conjugacy class on its own, the following lemma describing the resultant conjugacy class for each product is immediate.
% In each subsection below we describe the (union of) conjugacy classes corresponding to the 

\begin{lem}\label{lemma:productsDef}
Let $G$ and $H$ be conjugacy class graphs as described above with conjugacy classes $C_G$ and $C_H$. Then the following describes their products.
\begin{itemize}
    \item $G\times H$ has conjugacy class $C_G\times C_H$.
    \item $G\square H$ has conjugacy class $({e_G}\times C_H) \sqcup (C_G\times {e_H})$.
    \item $G\boxtimes H$ has conjugacy class $({e_G}\cup C_G)\times ({e_H}\cup C_H) \setminus \{(e_G,e_H)\}$ where we remove the identity since there are no self loops.
    \item $G[H]$ has conjugacy class $(C_G\times \Gamma_H)\cup ({e_G}\times C_H)$.
\end{itemize}
\end{lem}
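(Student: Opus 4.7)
The plan is to prove the lemma by direct case analysis, anchored on a single uniform translation: in any conjugacy class graph on a group $\Gamma$ with set $C$, the adjacency $v \sim w$ is equivalent to the coset condition $vw^{-1} \in C$. Specializing to $\Gamma_G \times \Gamma_H$, this becomes $(vx^{-1}, wy^{-1}) \in C$. The whole task will then reduce to expanding each graph product's adjacency rule into a membership condition on the pair $(vx^{-1}, wy^{-1})$ and reading off the resulting set $C$.

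I would carry out the four expansions in parallel. For the categorical product, ``$v \sim_G x$ and $w \sim_H y$'' translates directly to $(vx^{-1}, wy^{-1}) \in C_G \times C_H$. For the Cartesian product, the two disjuncts $(v=x,\, w\sim_H y)$ and $(v\sim_G x,\, w=y)$ give $\{e_G\} \times C_H$ and $C_G \times \{e_H\}$, which are disjoint because irreflexivity forces $e_G \notin C_G$ and $e_H \notin C_H$. The strong product adjoins the categorical adjacency to these, collapsing to $(\{e_G\}\cup C_G) \times (\{e_H\}\cup C_H) \setminus \{(e_G,e_H)\}$. For the lexicographic product, the first disjunct $v \sim_G x$ places no constraint on $w, y$, so $wy^{-1}$ ranges over all of $\Gamma_H$, contributing $C_G \times \Gamma_H$, and the second disjunct contributes $\{e_G\} \times C_H$.

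Next I would verify the defining properties of a conjugacy class graph for each $C$. Symmetry $C^{-1} = C$ is immediate from $C_G^{-1} = C_G$, $C_H^{-1} = C_H$, and the fact that $\Gamma_H$ and $\{e_G\}$ are closed under inversion. That $C$ is a union of conjugacy classes in $\Gamma_G \times \Gamma_H$ uses the standard fact that conjugacy classes in a direct product are products of conjugacy classes in the factors, together with the observation that both $\{e_G\}$ and $\Gamma_H$ are themselves unions of conjugacy classes.

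The one step I expect to require more care is the generation condition. For the Cartesian, strong, and lexicographic products this is automatic: each $C$ contains both $C_G \times \{e_H\}$ and $\{e_G\} \times C_H$, which generate $\Gamma_G \times \Gamma_H$ as soon as $C_G, C_H$ generate their factors. The categorical case is the real obstacle, since $C_G \times C_H$ need not generate the full direct product in general (a length-parity mismatch between words in $C_G$ and $C_H$ can leave a subgroup of the diagonal flavor). In the Hadamard setting this is resolved by $2$-torsion, since any word in $C_G$ can be padded by pairs $c \cdot c = e$ to match a prescribed length parity, so $C_G \times C_H$ does generate $\Gamma_G \times \Gamma_H$; in full generality the categorical case should be interpreted on the subgroup generated by $C_G \times C_H$, corresponding to the connected component of the product graph through the identity.
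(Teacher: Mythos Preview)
Your proposal is correct and carries out precisely the direct verification that the paper has in mind; the paper itself gives no proof beyond declaring the lemma ``immediate,'' so your case analysis is exactly the omitted argument.

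Your attention to the generation condition goes beyond the paper, which does not address it at all. You are right that $C_G \times C_H$ need not generate $\Gamma_G \times \Gamma_H$ in general. One small imprecision: padding a word by $c \cdot c = e$ increases its length by $2$, so it \emph{preserves} parity rather than letting you match an arbitrary one. In the Hadamard case generation still holds, but for a slightly stronger reason: since $N$ is a multiple of $4$, one can choose $c_1, c_2 \in C$ whose $(-1)$-supports overlap in exactly $N/4$ positions, so that $c_1 c_2 \in C$ and hence $e = c_1 \cdot c_2 \cdot (c_1 c_2)$ is a product of three elements of $C$; thus both parities of word length are available. More to the point for the paper's purposes, the generation hypothesis is never invoked in the proof of Theorem~\ref{thm:spectrum_Ito} or in the spectral computations of Section~\ref{sec:Products}, so even when it fails the eigenvalue formulas remain valid; the only effect is disconnectedness of the product graph, which your suggested restriction to the connected component handles correctly.
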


We note that the irreducible representations of $\Gamma_G\times \Gamma_H$ are of the form $\rho_G\otimes \rho_H$ where $\rho_G$ and $\rho_H$ are irreducible representations of $\Gamma_G$ and $\Gamma_H$. Hence, the irreducible characters of $\Gamma_G\times \Gamma_H$ are of the form $\chi = \chi_G\chi_H$ where $\chi_G$ and $\chi_H$ are irreducible characters of $\Gamma_G$ and $\Gamma_H$. Given such an irreducible character, Ito's Theorem \ref{thm:spectrum_Ito} associates $\lambda = \sum_{(c,d)\in C} \chi_G(c)\chi_H(d)/\chi(e_G)\chi(e_H) = \lambda_G\lambda_H$ where $C$ is the conjugacy class for the product, $\lambda_G$ is associated to $\chi_G$ and $\lambda_H$ is associated to $\lambda_H$. We will use these results to give bounds on the quantum chromatic numbers of products of conjugacy class graphs, and apply these results to Hadamard graphs. Throughout this section let $G$ and $H$ be conjugacy class graphs as above corresponding to conjugacy classes and groups $C_G\subset \Gamma_G$ and $C_H\subset \Gamma_H$. Further, let $\lambda_{G,min},\lambda_{G,max}, \lambda_{H,min},\lambda_{H,max}$ be the minimum and maximum eigenvalues of $G$ and $H$.

Considering the categorical product we find the following proposition.
\begin{prop}\label{prop:conjCat}
    Take $G$ and $H$ non-empty graphs and their eigenvalues as above. Assume WLOG that $\lambda_{G,min}\lambda_{H,max}\leq \lambda_{H,min}\lambda_{G,max}$. Then,
    $$\chi_q(G\times H)\geq  1 + \frac{\lambda_{H,max}}{|\lambda_{H,min}|}.$$
\end{prop}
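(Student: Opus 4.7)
The plan is to apply the Hoffman-type lower bound of \cite{Priyanga} directly to $G\times H$ after pinning down its entire spectrum via Ito's character-theoretic formula.

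First, Lemma \ref{lemma:productsDef} identifies $G\times H$ with the conjugacy class graph on $\Gamma_G\times \Gamma_H$ associated to $C_G\times C_H$. Every irreducible character of $\Gamma_G\times \Gamma_H$ factors as $\chi = \chi_G\chi_H$ with $\chi_G, \chi_H$ irreducible characters of $\Gamma_G, \Gamma_H$, so Theorem \ref{thm:spectrum_Ito} assigns to $\chi$ the eigenvalue
\[
\frac{1}{\chi_G(e_G)\chi_H(e_H)}\sum_{(c,d)\in C_G\times C_H}\chi_G(c)\chi_H(d) \;=\; \lambda_G\lambda_H,
\]
where $\lambda_G, \lambda_H$ are the eigenvalues of $G, H$ attached to $\chi_G, \chi_H$. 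This is precisely the factorization remarked in the paragraph preceding the proposition, so the spectrum of $G\times H$ is exactly the multiset $\{\lambda_G\lambda_H\}$ of products.

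Next, I would identify the extremal eigenvalues. Since classical adjacency matrices are entrywise non-negative, Perron--Frobenius gives $\lambda_{G,max}\geq |\lambda_{G,min}|$ and $\lambda_{H,max}\geq |\lambda_{H,min}|$, so that
\[
\lambda_{G,max}\lambda_{H,max}\;\geq\; |\lambda_{G,min}|\,|\lambda_{H,min}| \;=\; \lambda_{G,min}\lambda_{H,min}.
\]
The two remaining products $\lambda_{G,min}\lambda_{H,max}$ and $\lambda_{G,max}\lambda_{H,min}$ are non-positive, so $\lambda_{G,max}\lambda_{H,max}$ is the maximum eigenvalue of $G\times H$. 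The minimum is the more negative of the two cross products, which the stated WLOG hypothesis selects for us.

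Finally, I would substitute the resulting extremal values into the Hoffman bound $\chi_q(G\times H)\geq 1 + \lambda_{max}/|\lambda_{min}|$ and cancel the common positive factor in numerator and denominator to reach the asserted ratio. The only real bookkeeping is verifying that the Perron--Frobenius comparison above truly shows $\lambda_{G,max}\lambda_{H,max}$ dominates the other positive candidate $\lambda_{G,min}\lambda_{H,min}$, and that the WLOG hypothesis points to the cross product whose absolute value cancels appropriately to leave $\lambda_{H,max}/|\lambda_{H,min}|$; I do not anticipate any deeper obstacle beyond this careful sign tracking.
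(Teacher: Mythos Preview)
Your proposal is correct and follows essentially the same route as the paper: identify the spectrum of $G\times H$ as the set of products $\lambda_G\lambda_H$ via Theorem~\ref{thm:spectrum_Ito} and Lemma~\ref{lemma:productsDef}, locate the extremal eigenvalues, and feed them into the Hoffman bound. The only cosmetic difference is that the paper invokes the trace--zero observation to guarantee each factor has a negative eigenvalue, whereas you invoke Perron--Frobenius to compare $\lambda_{G,max}\lambda_{H,max}$ with $\lambda_{G,min}\lambda_{H,min}$; both remarks serve the same purpose of pinning down $\lambda_{\max}(G\times H)=\lambda_{G,max}\lambda_{H,max}$, and the remainder of the argument is identical.
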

\begin{proof}
    First, since the trace of an adjacency matrix is zero any non-empty graph must have both positive and negative eigenvalues. Hence, since the conjugacy class of $G\times H$ is $C = C_G\times C_H$, we find the eigenvalue corresponding to 
    $\chi = \chi_G \chi_H$, where $\chi_G,\chi_H$ are irreducible characters of $G$ and $H$, is $\lambda = \sum_{c\in C_G,d \in C_H}\chi_G(c) \chi_H(d)/ \chi_G(e_G) \chi_H(e_H) = \lambda_G\lambda_H$ 
    where $\lambda_G$ and $\lambda_H$ are in the spectrum of $G$ and $H$ respectively. Hence, the maximum eigenvalue of the spectrum of $G\times H$ is $\lambda_{max} = \lambda_{G,max}\lambda_{H,max}$, and the minimum eigenvalue is $\lambda_{min} = \min\{\lambda_{G,min}\lambda_{H,max}, \lambda_{G,max}\lambda_{H,min}\}$. Thus, the Hoffman bound gives us $\chi_q(G\times H) \geq \frac{\lambda_{max}}{|\lambda_{min}|}+1 = 1 + \frac{\lambda_{H,max}}{|\lambda_{H,min}|}$.
\end{proof}

\begin{remark}
    Note that the above bound is exactly the Hoffman bound for $H$. 
\end{remark}

Applying Proposition \ref{prop:conjCat} to Hadamard graphs along with Theorem 4.4 of \cite{desantiago2024quantumchromaticnumbersproducts} to obtain an upper bound $\chi_q(G\times H)\leq \min\{\chi_q(G), \chi_q(H)\}$, we find the following exact result.
\begin{cor}
    If $H_N$ and $H_M$ are Hadamard graphs on $2^N$ and $2^M$ vertices with $N,M$ multiples of $4$ and $N\geq M$, then 
    $$\chi_q(H_N\times H_M) = M.$$
\end{cor}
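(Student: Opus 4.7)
The plan is to sandwich $\chi_q(H_N \times H_M)$ between matching lower and upper bounds, both equal to $M$.

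For the lower bound, I would first invoke Theorem \ref{thm:Hadamard_spectrum}, which gives the Hoffman ratio $\lambda_{\max}(H_k)/|\lambda_{\min}(H_k)| = k-1$ for each Hadamard graph $H_k$ (so the ratio is $N-1$ for $H_N$ and $M-1$ for $H_M$). Applying Proposition \ref{prop:conjCat} together with the eigenvalue recipe for conjugacy class graphs of direct products, the extremal eigenvalues of the categorical product are $\lambda_{\max}(H_N \times H_M) = \binom{N}{N/2}\binom{M}{M/2}$ and $\lambda_{\min}(H_N \times H_M) = \min\{\lambda_{\min}(H_N)\lambda_{\max}(H_M),\ \lambda_{\max}(H_N)\lambda_{\min}(H_M)\}$. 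Since $N \geq M$ forces $1/(N-1) \leq 1/(M-1)$, the second term is the more negative of the two in absolute value, so the Hoffman ratio of the product equals $M-1$. Feeding this into the Hoffman bound yields $\chi_q(H_N \times H_M) \geq 1 + (M-1) = M$.

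For the upper bound, I would cite Theorem 4.4 of \cite{desantiago2024quantumchromaticnumbersproducts}, which gives the general inequality $\chi_q(G \times H) \leq \min\{\chi_q(G), \chi_q(H)\}$ for any (quantum) graphs $G$ and $H$. Combining this with Corollary \ref{cor:main} (equivalently Theorem \ref{thm:main}), which identifies $\chi_q(H_k) = k$, yields $\chi_q(H_N \times H_M) \leq \min\{N, M\} = M$.

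Combining the two bounds closes the argument. There is no substantive obstacle here: the proof is a direct specialization of Proposition \ref{prop:conjCat} to the eigenvalues from Theorem \ref{thm:Hadamard_spectrum}, paired with the cited product upper bound. The only point requiring care is correctly identifying which of $H_N, H_M$ plays the role of $G$ versus $H$ under the WLOG hypothesis of Proposition \ref{prop:conjCat}; this is pinned down by the ordering $N \geq M$ via the comparison $1/(N-1) \leq 1/(M-1)$ displayed above, ensuring that it is the smaller graph $H_M$ whose Hoffman ratio survives into the product bound.
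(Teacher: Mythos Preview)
Your proposal is correct and follows exactly the paper's own argument: the lower bound comes from specializing Proposition~\ref{prop:conjCat} (via Theorem~\ref{thm:Hadamard_spectrum}) to see that the Hoffman ratio of $H_N\times H_M$ equals $M-1$, and the upper bound comes from Theorem~4.4 of \cite{desantiago2024quantumchromaticnumbersproducts} together with Corollary~\ref{cor:main}. Your only addition is the explicit verification of which factor realizes the WLOG hypothesis in Proposition~\ref{prop:conjCat}, which the paper leaves implicit.
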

\begin{remark}
    Take $H\sqcup H$ to be the disjoint union of two copies of a graph $H$. Then the categorical, Cartesian, and strong products of graphs $G$ and $H\sqcup H$ are the disjoint union of two copies of the product of $G$ and $H$. Thus, we may calculate the quantum chromatic number of products of Hadamard graphs using only the fully connected component of the graphs (i.e., the conjugacy class graph component). 
\end{remark}

Now consider the Cartesian graph product. In this case the original conjugacy class is not the product of the conjugacy classes. Hence, we find, 
\begin{prop}\label{prop:conjCart}
    Take $G$ and $H$ non-empty conjugacy class graphs with eigenvalues as above. Then
    $$\chi_q(G\square H) \geq 1 + \frac{\lambda_{G,max} + \lambda_{H,max}}{|\lambda_{G,min} + \lambda_{H,min}|}.$$
\end{prop}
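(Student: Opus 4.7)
The plan is to mirror the argument for the categorical product in Proposition \ref{prop:conjCat}: use Lemma \ref{lemma:productsDef} to realize $G \square H$ as a conjugacy class graph, read off its spectrum via Theorem \ref{thm:spectrum_Ito}, and then apply the Hoffman bound from Section \ref{subsec:quantumGraphsAndColorings}. The one structural difference from the categorical case is that the conjugacy class attached to $\square$ is a \emph{disjoint union} $(\{e_G\} \times C_H) \sqcup (C_G \times \{e_H\})$ rather than a product, so the eigenvalues will end up additive rather than multiplicative in the factor eigenvalues. This additive phenomenon is precisely what produces the bound as stated.

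Concretely, the first step is to note that the irreducible characters of $\Gamma_G \times \Gamma_H$ are products $\chi_G\chi_H$ of irreducible characters of the factors, with $\chi(e_{G}, e_{H}) = \chi_G(e_G)\chi_H(e_H)$. Plugging $\chi = \chi_G\chi_H$ into Theorem \ref{thm:spectrum_Ito} and using that $\chi_G$ is identically $\chi_G(e_G)$ on the summand $\{e_G\} \times C_H$ (and symmetrically for $\chi_H$ on $C_G \times \{e_H\}$), the normalization by $\chi_G(e_G)\chi_H(e_H)$ separates the sum and reduces it to $\lambda_H + \lambda_G$, where $\lambda_G$ and $\lambda_H$ are the eigenvalues of $G$ and $H$ associated with $\chi_G$ and $\chi_H$. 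The second step is to observe that as $\chi_G, \chi_H$ range over all irreducible characters, the full spectrum of $G \square H$ is recovered as the multiset of sums $\lambda_G + \lambda_H$; in particular the extremal eigenvalues are $\lambda_{G,\max} + \lambda_{H,\max}$ and $\lambda_{G,\min} + \lambda_{H,\min}$.

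The third and final step is to feed these extremal values into the Hoffman bound. A small but necessary side remark is that since $G$ and $H$ are both non-empty, their adjacency matrices have trace zero and therefore have genuinely negative minimum eigenvalues, guaranteeing $\lambda_{G,\min} + \lambda_{H,\min} < 0$ so that the denominator of the stated bound is well defined and the absolute value is unambiguous. I do not anticipate a real obstacle here; the proof is essentially a bookkeeping exercise once the character-theoretic framework of Theorem \ref{thm:spectrum_Ito} is in place. The one point requiring mild care is the evaluation of the character sum on the disjoint union, where on each summand one of the two factor characters is evaluated at the identity, but this is immediate from the definitions.
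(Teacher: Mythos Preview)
Your proposal is correct and follows essentially the same approach as the paper: the paper's proof simply says it is analogous to Proposition~\ref{prop:conjCat}, then carries out exactly the character sum you describe over $(\{e_G\}\times C_H)\sqcup(C_G\times\{e_H\})$ to obtain the eigenvalue $\lambda_G+\lambda_H$, and concludes by the Hoffman bound. Your side remark about trace zero forcing $\lambda_{G,\min}+\lambda_{H,\min}<0$ is a nice touch that the paper leaves implicit here.
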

\begin{proof}
    This proof is analogous to the proof of Proposition \ref{prop:conjCat}. The only difference comes from noting that the eigenvalue corresponding to the irreducible character $\chi = \chi_G\chi_H$ is 
    \begin{align*}
        \sum_{(c,d)\in ({e_G}\times C_H) \sqcup (C_G\times {e_H})}\chi_G(c)\chi_H(d)/\chi_G(e_G)\chi_H(e_H) =& \sum_{c\in C_G}\chi_G(c)/\chi_G(e_G) + \sum_{d\in C_H}\chi_H(d)/\chi_H(e_H) \\=& \lambda_G + \lambda_H,
    \end{align*}
    where $\lambda_G$ and $\lambda_H$ are in the spectrum of $G$ and $H$ respectively. Hence, we find a sum
    rather than a product of eigenvalues of the original graphs.
\end{proof}

\begin{remark}
    Applying this bound to Hadamard graphs we find a lower bound. However, it does not appear to improve upon the bound $\chi_q(G\square H) \geq \max\{\chi_q(G), \chi_q(H)\}$ derived in \cite{desantiago2024quantumchromaticnumbersproducts}.
\end{remark}

Next consider the strong product. As this product contains all edges in both the categorical and Cartesian products we may prove the following proposition in an analogous manner to Propositions \ref{prop:conjCat} and \ref{prop:conjCart} by noting the eigenvalues associated to $\chi = \chi_G\chi_H$ are $\lambda_G\lambda_H + \lambda_G + \lambda_H$.
\begin{prop}\label{prop:conjStrong}
    Take $G$ and $H$ non-empty conjugacy class graphs with eigenvalues as above. Then, assuming WLOG that $\lambda_{G,min}\lambda_{H,max}\leq \lambda_{H,min}\lambda_{G,max}$.
    $$\chi_q(G\square H) \geq 1 + \frac{\lambda_{G,max}\lambda_{H,max} + \lambda_{G,max} + \lambda_{H,max}}{|\lambda_{G,min}\lambda_{H,max} + \lambda_{G,min} + \lambda_{H,min}|}.$$
\end{prop}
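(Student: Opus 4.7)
The plan is to mirror the arguments given for Propositions \ref{prop:conjCat} and \ref{prop:conjCart}, namely: identify the conjugacy class data for the strong product, compute the eigenvalue that each irreducible character $\chi = \chi_G \chi_H$ of $\Gamma_G \times \Gamma_H$ contributes via Theorem \ref{thm:spectrum_Ito}, locate the extremal eigenvalues among these, and then feed them into the Hoffman lower bound of \cite{Priyanga}.

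First I would invoke Lemma \ref{lemma:productsDef} to write the generating conjugacy class of $G \boxtimes H$ as $C = (\{e_G\} \cup C_G) \times (\{e_H\} \cup C_H) \setminus \{(e_G, e_H)\}$. For a fixed product character $\chi = \chi_G \chi_H$, the eigenvalue contributed by $\chi$ is
\begin{align*}
    \frac{1}{\chi_G(e_G)\chi_H(e_H)}\sum_{(c,d)\in C} \chi_G(c)\chi_H(d)
    &= \frac{1}{\chi_G(e_G)\chi_H(e_H)}\left(\sum_{c\in C_G}\sum_{d \in C_H}\chi_G(c)\chi_H(d) \right.\\
    &\qquad\left.+\ \chi_H(e_H)\sum_{c\in C_G}\chi_G(c) + \chi_G(e_G)\sum_{d\in C_H}\chi_H(d)\right).
\end{align*}
Recognizing each partial sum (normalized by the appropriate $\chi(e)$) as the eigenvalue that $\chi_G$ or $\chi_H$ contributes to $G$ or $H$ individually, this simplifies to $\lambda_G\lambda_H + \lambda_G + \lambda_H$, where $\lambda_G$ and $\lambda_H$ range over the spectra of $G$ and $H$. (Equivalently, one can note $A_{G\boxtimes H} = A_G\otimes A_H + A_G \otimes I + I \otimes A_H$ on the regular-representation side and read off eigenvalues directly, but the character-theoretic bookkeeping above is what fits the paper's conventions.)

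Next I would identify the extremal values of the expression $(\lambda_G, \lambda_H)\mapsto \lambda_G\lambda_H + \lambda_G + \lambda_H$ as $\lambda_G$ ranges over $\{\lambda_{G,\min},\lambda_{G,\max}\}$ and $\lambda_H$ over $\{\lambda_{H,\min},\lambda_{H,\max}\}$. Since the trivial characters give $\lambda_{G,\max}=|C_G|>0$ and $\lambda_{H,\max}=|C_H|>0$, the maximum is clearly $\lambda_{G,\max}\lambda_{H,\max} + \lambda_{G,\max} + \lambda_{H,\max}$. For the minimum, the WLOG assumption $\lambda_{G,\min}\lambda_{H,\max}\leq \lambda_{G,\max}\lambda_{H,\min}$ pins down which mixed pair dominates the negative part, yielding $\lambda_{G,\min}\lambda_{H,\max} + \lambda_{G,\min} + \lambda_{H,\min}$ (here one uses the non-empty hypothesis, which forces $\lambda_{G,\min}, \lambda_{H,\min} < 0$ because the trace of an adjacency matrix is zero).

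The main obstacle is precisely this last step: justifying the claimed choice of minimizer among the four candidate corners $(\lambda_{G,\star},\lambda_{H,\star})$. Unlike the categorical case, the extra linear terms $\lambda_G + \lambda_H$ mean the minimizer of $\lambda_G\lambda_H + \lambda_G + \lambda_H$ does not coincide automatically with the minimizer of $\lambda_G\lambda_H$ alone, so I would carry out a careful case analysis using the sign assumption and the fact that $\lambda_{G,\min}$ is negative while $\lambda_{H,\max}$ is positive. Once the extremal eigenvalues are in hand, applying the Hoffman bound $\chi_q(\cG) \geq 1 + \lambda_{\max}/|\lambda_{\min}|$ of Section \ref{subsec:quantumGraphsAndColorings} to $\cG = G\boxtimes H$ produces the stated inequality directly.
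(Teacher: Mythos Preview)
Your proposal follows exactly the approach the paper intends: the paper's own proof is a single sentence stating that the argument is analogous to Propositions~\ref{prop:conjCat} and~\ref{prop:conjCart} once one notes that the eigenvalue attached to $\chi=\chi_G\chi_H$ is $\lambda_G\lambda_H+\lambda_G+\lambda_H$, and you reproduce precisely this computation via Lemma~\ref{lemma:productsDef} and Theorem~\ref{thm:spectrum_Ito}. Your observation that locating the minimizing corner is more delicate here than in the categorical case (because of the additive terms $\lambda_G+\lambda_H$) is a fair point that the paper does not address; in this sense your write-up is already more careful than the paper's one-line justification.
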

\begin{remark}
     Again, the lower bound obtained for Hadamard graphs does not appear improve upon the bound found in \cite{desantiago2024quantumchromaticnumbersproducts}.
\end{remark}

Finally, let us consider the lexicographic product. 
\begin{prop}\label{prop:conjLex}
    Take $G$ and $H$ non-empty conjugacy class graphs with eigenvalues as above. Then,
    $$\chi_q(G[H]) \geq 1 + \frac{\max\{\lambda_{G,max}|\Gamma_H| + |C_H|, \lambda_{H,max}\}}{|\min\{\lambda_{G,min}|\Gamma_G| + |C_H|, \lambda_{H,min}\}|}.$$
\end{prop}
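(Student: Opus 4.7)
The plan is to repeat the character-theoretic strategy of Propositions \ref{prop:conjCat}, \ref{prop:conjCart}, and \ref{prop:conjStrong}: apply Ito's Theorem \ref{thm:spectrum_Ito} to the direct-product group $\Gamma_G \times \Gamma_H$ together with the inverse-closed union of conjugacy classes $C = (C_G \times \Gamma_H) \cup (\{e_G\} \times C_H)$ described in Lemma \ref{lemma:productsDef}, and then substitute the resulting extremal eigenvalues into the Hoffman-type lower bound for $\chi_q$ recalled at the end of Section \ref{subsec:quantumGraphsAndColorings}.

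The irreducible characters of $\Gamma_G \times \Gamma_H$ are products $\chi = \chi_G \chi_H$, and since $e_G \notin C_G$ the two pieces of $C$ are disjoint; thus the eigenvalue attached to $\chi$ splits as
\begin{equation*}
\lambda_\chi = \Bigl(\sum_{c \in C_G}\frac{\chi_G(c)}{\chi_G(e_G)}\Bigr)\Bigl(\sum_{d \in \Gamma_H}\frac{\chi_H(d)}{\chi_H(e_H)}\Bigr) + \sum_{d \in C_H}\frac{\chi_H(d)}{\chi_H(e_H)}.
\end{equation*}
Orthogonality of $\chi_H$ against the trivial character of $\Gamma_H$ forces the inner sum over $\Gamma_H$ to vanish unless $\chi_H$ is trivial, in which case it equals $|\Gamma_H|$. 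This yields two clean families of eigenvalues: $\lambda_\chi = \lambda_G|\Gamma_H| + |C_H|$ when $\chi_H$ is trivial (with $\lambda_G$ the eigenvalue of $G$ attached to $\chi_G$), and $\lambda_\chi = \lambda_H$ independent of $\chi_G$ when $\chi_H$ is nontrivial.

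Combining, the extremal eigenvalues of $G[H]$ are $\max\{\lambda_{G,max}|\Gamma_H| + |C_H|,\, \lambda_{H,max}\}$ and $\min\{\lambda_{G,min}|\Gamma_H| + |C_H|,\, \lambda_{H,min}\}$, and the Hoffman bound $\chi_q \geq 1 + \lambda_{max}/|\lambda_{min}|$ immediately gives the stated inequality. I do not expect any genuinely hard step — the key moment is recognizing the character-orthogonality cancellation in the $\chi_H$-nontrivial case, which collapses the spectrum into only two families. The bookkeeping of which family supplies each extreme eigenvalue (and comparing to the analogous spectrum computations in Propositions \ref{prop:conjCat} and \ref{prop:conjCart}) is the only point requiring a little care.
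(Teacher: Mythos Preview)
Your proposal is correct and follows essentially the same route as the paper: compute the eigenvalue attached to $\chi_G\chi_H$ via Theorem \ref{thm:spectrum_Ito}, use character orthogonality to collapse $\sum_{d\in\Gamma_H}\chi_H(d)$ into the trivial/nontrivial dichotomy, read off the two families $\lambda_G|\Gamma_H|+|C_H|$ and $\lambda_H$, and feed the extremes into the Hoffman bound. The only cosmetic difference is that you explicitly note the disjointness of the two pieces of $C$ before splitting the sum, whereas the paper writes the split directly.
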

\begin{proof}
    The eigenvalue associated to the irreducible character $\chi = \chi_G\chi_H$, for $\chi_G$ and $\chi_H$ irreducible characters of $\Gamma_G$ and $\Gamma_H$ respectively, is 
    \begin{align*}
        \lambda =& \sum_{c\in C_G, d\in \Gamma_H}\chi_G(c)\chi_H(d)/\chi_G(e_G)\chi_H(e_H) + \sum_{h\in C_H}\chi_H(h)/\chi_H(e_H)\\
        =& \frac{\lambda_G}{\chi_H(e_H)}\sum_{d\in \Gamma_H}\chi_H(d) + \lambda_H.
    \end{align*}
    
    We now use an important property of characters: if $\alpha$ is a character of a group $\Gamma$, then $$\sum_{g\in \Gamma}\alpha(g) = \begin{cases}
        |\Gamma| & \text{ if } \alpha \text{ is trivial,}\\
        0 & \text{ else}.
    \end{cases}$$ Additionally, if $\chi_H$ is the trivial character then the associated eigenvalue in $H$ is $\lambda_H = |C_H|$ .
    
    Hence, we find $$\lambda = \begin{cases}
        |\Gamma_H|\lambda_G + |C_H| & \text{ if } \chi_H \text{ is trivial},\\
        \lambda_H & \text{else}.
    \end{cases}$$
    The remainder of the proof is analogous to the proof of Proposition \ref{prop:conjCat}.
\end{proof}

Applying this to Hadamard graphs we find,
\begin{cor}
    If $H_N$ and $H_M$ are Hadamard graphs on $2^N$ and $2^M$ vertices with $N,M$ multiples of $4$ and $M>1$, then 
    $$\chi_q(H_N[H_M]) \geq N+1.$$
\end{cor}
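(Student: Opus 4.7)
The plan is to apply Proposition \ref{prop:conjLex} with $G = H_N$ and $H = H_M$, substitute the eigenvalues and group data from Theorem \ref{thm:Hadamard_spectrum} and Section \ref{subsec:HadamardConj}, and show that the resulting Hoffman-type lower bound strictly exceeds $N$. Since $\chi_q$ is integer-valued, a strict inequality $> N$ forces $\chi_q(H_N[H_M]) \geq N+1$.

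First, I would assemble the numerical input. Viewing $H_N$ and $H_M$ as conjugacy class graphs on their connected components, I have $|\Gamma_{H_M}| = 2^{M-1}$ and $|C_{H_M}| = \binom{M}{M/2}$. Theorem \ref{thm:Hadamard_spectrum} gives $\lambda_{H_N,\max} = \binom{N}{N/2}$, $\lambda_{H_N,\min} = -\binom{N}{N/2}/(N-1)$, and the analogous values for $H_M$.

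Next, I would identify the extremal eigenvalues of $H_N[H_M]$ using the case analysis in the proof of Proposition \ref{prop:conjLex}. The maximum is $\lambda_{\max} = 2^{M-1}\binom{N}{N/2} + \binom{M}{M/2}$. For the minimum, I must compare the two candidates
\[
c_1 := -\tfrac{2^{M-1}\binom{N}{N/2}}{N-1} + \binom{M}{M/2}, \qquad c_2 := -\tfrac{\binom{M}{M/2}}{M-1}.
\]
Using the elementary estimates $\binom{M}{M/2} < 2^{M-1}$ for $M \geq 4$ and $\binom{N}{N/2}/(N-1) \geq 2$ for $N \geq 4$ (a direct check at $N=4$ together with the obvious monotonicity), a short calculation shows $c_1 < c_2$. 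Hence $|\lambda_{\min}| = 2^{M-1}\binom{N}{N/2}/(N-1) - \binom{M}{M/2}$, which is positive by the same estimates, so the Hoffman bound is meaningful.

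Finally, I would plug these into Proposition \ref{prop:conjLex} and check that $1 + \lambda_{\max}/|\lambda_{\min}| > N$. Cross-multiplying, this reduces to
\[
2^{M-1}\binom{N}{N/2} + \binom{M}{M/2} > 2^{M-1}\binom{N}{N/2} - (N-1)\binom{M}{M/2},
\]
in which the dominant $2^{M-1}\binom{N}{N/2}$ terms cancel, leaving the tautology $N\binom{M}{M/2} > 0$. Integrality of $\chi_q$ then yields $\chi_q(H_N[H_M]) \geq N+1$. The only real obstacle is the bookkeeping in the middle step of confirming $c_1 < c_2$; the eigenvalue extraction is handed to us by the proposition, and the concluding algebra is a one-line cancellation of the dominant terms.
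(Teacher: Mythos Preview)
Your proof is correct and follows essentially the same approach as the paper: plug the Hadamard data into Proposition~\ref{prop:conjLex} and verify that the resulting Hoffman bound strictly exceeds $N$. The only difference is organizational: the paper does not decide which of $c_1,c_2$ is smaller and instead treats both cases separately, whereas you first establish $c_1<c_2$ and then carry out the single relevant computation, which slightly streamlines the argument.
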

\begin{proof}
    The conjugacy class $C_M$ for $H_M$ is all vertices adjacent to the all $1$s vector $h$. Thus, $|C_M| = \binom{M}{M/2}$. Additionally, it is clear that the quantum chromatic number of $G[H]$ is the same as $G[H\sqcup H]$ where $H\sqcup H$ is the disjoint union of two copies of $H$. Hence, we may compute the quantum chromatic number using the conjugacy class graph view of $H_N$ and $H_M$ on $2^{N/2}$ and $2^{M/2}$ vertices. The group for $H_M$ is $\Gamma_M$, and has $2^{M/2}$ elements. Using Proposition \ref{prop:conjLex}, we now find that the maximum eigenvalue of $H_N[H_M]$ is $\lambda_{max} = \max\left\{\binom{N}{N/2}2^{M/2} + \binom{M}{M/2}, \binom{M}{M/2}\right\} = \binom{N}{N/2}2^{M/2} + \binom{M}{M/2}$. Similarly, the minimum eigenvalue is $\lambda_{min} = \min\left\{- \frac{\binom{N}{N/2}2^{M/2}}{N-1} + \binom{M}{M/2}, -\frac{\binom{M}{M/2}}{M-1}\right\}$. 

    From this, we see that if $\lambda_{min} = - \frac{\binom{N}{N/2}2^{M/2}}{N-1} + \binom{M}{M/2}$, then 
    \begin{align*}
        \chi_q(H_N[H_M]) \geq& 1 + \frac{ \binom{N}{N/2}2^{M/2} + \binom{M}{M/2}}{\frac{\binom{N}{N/2}2^{M/2}}{N-1} - \binom{M}{M/2}}
        \geq 1 + \frac{\binom{N}{N/2}2^{M/2} + \binom{M}{M/2}}{\frac{\binom{N}{N/2}2^{M/2}}{N-1}}\\
        =& 1 + (N-1)\left(1 + \frac{\binom{M}{M/2}}{\binom{N}{N/2}}2^{-M/2}\right) > N.
    \end{align*}
    % Hence, must have $\chi_q(H_N[H_M]) \geq N+1.$

    Similarly, if $\lambda_{min} = -\frac{\binom{M}{M/2}}{M-1}$ then 
    \begin{align*}
        \chi_q(H_N[H_M])\geq&1 + \frac{ \binom{N}{N/2}2^{M/2} + \binom{M}{M/2}}{\binom{M}{M/2}/(M-1)}\\
        =& 1 + (M-1)\left(1 + 2^{M/2}\frac{\binom{N}{N/2}}{\binom{M}{M/2}}\right) \geq M + (M-1)\binom{N}{N/2}>N.
    \end{align*}
    Thus, in either case we always have $\chi_q(H_N[H_M]) \geq N+1$. 
    % (Also note that we now easily see that $\lambda_{min} = - \frac{\binom{N}{N/2}2^M}{N-1} + \binom{M}{M/2}$ except perhaps at very small values of $M$ or $N$).  
\end{proof}
While this is a seemingly weak lower bound on the quantum chromatic number, only upper bounds were previously known for lexicographic products (aside from bounds that apply to all quantum chromatic numbers). Further, while it is possible that the upper bound found in \cite{desantiago2024quantumchromaticnumbersproducts} is exact and hence also a lower bound, this bound relies on computing the $b$-fold quantum chromatic number of a graph which is itself a non-local invariant. In contrast, this spectral result gives an immediately calculable lower bound and further shows that the quantum chromatic number must increase for products of Hadamard graphs.
% Further, it is clear that repeatedly taking graph products would continue to increase the lower bound on the quantum chromatic number, thus providing a way to obtain arbitrarily large chromatic numbers through graph products.

\bibliographystyle{amsalpha}
\bibliography{references.bib}
%\printbibliography

\end{document}